\documentclass[a4paper,11pt]{amsart}
\usepackage{amssymb,amsmath,amsthm}
\numberwithin{equation}{section}
\usepackage{natbib}
\usepackage{paralist}               
\usepackage[margin=1in]{geometry}
\usepackage[titletoc]{appendix}     
\usepackage[final]{graphicx}
\usepackage{epstopdf}
\usepackage{subcaption}
\usepackage{float}
\usepackage{mathrsfs}
\usepackage{setspace}
\usepackage[bookmarks,bookmarksnumbered,bookmarksopen,breaklinks,linktocpage]{hyperref}
\newcommand{\ds}{\displaystyle}
\newcommand{\M}{\mathcal{M}}

\newcommand{\D}{\mathcal{D}}

\newcommand{\B}{\mathcal{B}}

\newcommand{\dd}{\mathrm{d}}

\newcommand{\sgn}{\operatorname{sgn}}
\newtheorem{Theorem}{Theorem}[section]
\newtheorem{Proposition}{Proposition}[section]
\newtheorem{Lemma}{Lemma}[section]

\theoremstyle{definition}           
\newtheorem{defn}{Definition}[section]
\theoremstyle{remark}

\newtheorem{Corollary}{Corollary}[section]
\usepackage{xcolor}
\usepackage{color,soul}

\begin{document}
	\title{Adjoint-based gradient methods for inverse design in a multiple fragmentation model}
	\author[Arijit Das]{Arijit Das}
	
	\address[Arijit Das]{Department of Mathematics \\
		Thapar Institute of Engineering and Technology, Patiala-147004, Punjab, INDIA}
	
	\email[Arijit Das]{\href{mailto:arijit.das@thapar.edu}{arijit.das@thapar.edu}}
	
	\keywords{Multiple fragmentation; Inverse design problem; Optimal control; Adjoint problem; Finite volume method} 
	\subjclass[2020]{65M08, 45K05, 76F55}

	\begin{abstract}
		We study an inverse design problem for the linear multiple fragmentation equation arising in particle dynamics. Our objective is to reconstruct an unknown initial size distribution that evolves, under a prescribed fragmentation law, into a desired size distribution at a specified final time. We first establish the existence of global mass-conserving solutions for a broad class of fragmentation kernels with unbounded rates, and subsequently prove the continuous dependence and uniqueness of these solutions under additional assumptions on the fragmentation kernels. We then formulate the inverse design problem as an optimal control problem constrained by the fragmentation dynamics and prove the existence of the optimal control problem. Also derive the corresponding continuous adjoint equation and propose a gradient-type iterative reconstruction method. For the numerical implementation, we develop finite volume schemes for both the forward and adjoint equations, including a weighted finite volume scheme designed to enhance mass conservation and accuracy. Two benchmark problems, involving linear and nonlinear fragmentation rates with known analytical solutions, are used to assess the accuracy and efficiency of the proposed approach and to compare the performance of the two discretizations in both forward simulations and inverse reconstructions.
	\end{abstract}
	
	\maketitle
	\section{Introduction}
	Size distribution is a fundamental characteristic in many systems involving dispersed objects, such as droplets in lean liquid–liquid dispersions \cite{hu1995evolution}, colloids in solid–liquid dispersions \cite{harada2006dependence}, solid bodies in granulation processes \cite{fries2013collision}, and molecular species in polymerization or degradation mechanisms \cite{montroll1940theory}. Although these objects differ in their physical nature, the evolution of their size distribution is typically governed by population-level mechanisms, most notably coagulation (aggregation) and fragmentation (breakage). In this study, we focus exclusively on the fragmentation mechanism, in which larger entities divide to form smaller-sized particles.
	
	To present the multiple fragmentation equation, we consider \( f(x,t) \) to be the particle number density function describing the concentration of particles of size \( x \ge 0 \) at time \( t \ge 0 \). Then the linear multiple fragmentation is given by
		\begin{align}\label{0_1}
		\frac{\partial f(x,t)}{\partial t} =\mathcal{A}\left(f\right)(x,t), \quad \text{with}\quad  x \in \mathbb{R}_+.
	\end{align}
	Where
	\begin{align}\label{0_6}
		\mathcal{A}\left(f\right)(x,t) :=\int_{x}^{\infty} b \left(x,y\right)S(y)f(y,t) \dd y - S(x) f(x,t),
	\end{align}
	together with the initial data
	\begin{align}\label{0_2}
		f(x,0) = f_0(x), \quad x \in \mathbb{R}_+.
	\end{align}
		Here, $S(x)$ denotes the overall fragmentation rate of a particle of size $x$, and $b(x,y)$ represents the breakage distribution function. The function $b(x,y)$ characterizes the rate at which particles of size $x$ are produced from the fragmentation of a parent particle of size $y$. In addition, the function $b(x,y)$ satisfies the following conditions:
	\begin{align}
		&\int_{0}^{y} b(x,y) \dd x = \nu(y),\qquad\text{with}\qquad b(x,y) =0, \quad\text{for}\quad x\ge y;\label{0_3}\\
		& \int_{0}^{y} xb(x,y) \dd x = y. \label{0_4}
	\end{align}
	The condition~\eqref{0_3} represents the average number of fragments produced in a single fragmentation event; thus, for multiple-fragment breakage, we have $\nu \ge 2$. Furthermore, relation~\eqref{0_4} expresses the mass–conservation principle associated with an individual fragmentation event.

	Following the pioneering work of Ziff and McGrady \cite{ziff1985kinetics} on the fragmentation equation \eqref{0_1}, the study of fragmentation dynamics has attracted substantial interest across physics, mathematics, and engineering. The inherently complex nature of the fragmentation equation makes it challenging to obtain analytical solutions for broad classes of fragmentation kernels. Consequently, after the foundational contributions of Ziff and McGrady, further analytical results were established only for a few simplified kernel structures \cite{ziff1991new, dubovskii1992exact, singh1996kinetics}. 
	
	Subsequently, the existence of weak solutions to the binary fragmentation model with specific fragmentation rates was studied in several works \cite{stewart1989global, dubovskiǐ1996existence, camejo2015singular}. These studies relied on standard weak-compactness technique and required relatively strong assumptions on the aggregation rates. In contrast, the present work employs a strong convergence approach to establish the existence of solutions to the multiple-fragmentation equation under weaker conditions on the fragmentation kernels. From an applications perspective, analytical studies alone are insufficient for complex and practically relevant kinetic kernels. This limitation has motivated the development of a variety of computational techniques for approximating solutions, including sectional methods \cite{kumar2008convergence, kumar2015development, saha2023rate}, stochastic approaches \cite{mishra2000monte, das2020approximate}, and moment-based methods \cite{diemer2002moment, marchisio2003quadrature}.

	To summarize the above discussion, it is evident that most existing studies focus on the initial value problem, beginning with a prescribed initial distribution. However, in many applied contexts \cite{xue2013imaging, kostoglou2005self, doumic2023moments, doumic2018estimating}, estimating either the initial distribution or the fragmentation kernel itself is of greater importance than solving the forward evolution equation. This naturally leads to a fundamental research question: \emph{Can we uniquely recover the past (or present) distribution from the present (or future) state?} In other words, our aim is to determine whether the underlying dynamical system is time-reversible.
	
	 From a physical standpoint, such reversibility is difficult to expect. Once a particle undergoes fragmentation, information about its original structure is irretrievably lost, and many distinct parent particles may lead to identical fragment distributions. Moreover, fragmentation dynamics tend to smooth the distribution over time—analogous to diffusion. Solving the system backward in time therefore requires reconstructing a less regular state from a more regular one, an inherently unstable procedure. 
	 	
	In recent years, several studies \cite{alomari2013recovery, doumic2018estimating} have addressed the problem of estimating the fragmentation rate. Most of these works rely on parameter-identification techniques in which the parameters of the fragmentation model are determined by minimizing a least-squares functional that measures the discrepancy between experimental observations and model predictions, following an appropriate parametrization of the problem.\emph{ As discussed earlier, predicting the initial state from the final state is far from straightforward. Motivated by this challenge, the present work proposes an alternative framework to study the system in a time-reversible context. In particular, we introduce an \emph{optimal control strategy} aimed at approximating the initial state and achieving a desired final state for this class of particulate processes.}
	
	At this stage, we can formulate our inverse design problem as follows: for a prescribed target function $f^\ast = f^\ast(x)$ at $t=T$, we want to determine a initial data $f_0$ such that $\ds f(x,T) = f^\ast(x)$ for all $x\in \mathbb{R}_+$. To address this, we introduce the following functional, which represents the classical quadratic error between the solution at the final time and the target function $f^\ast$:
	
	\begin{align}\label{0_}
		\min J\left(f_0\right) = \frac{1}{2} \int_{\mathbb{R}_+} \left(f(x,T; f_0) - f^{\ast}(x)\right)^2 \dd x,
	\end{align}

	\begin{figure}
		\centering
	\includegraphics[width=1.0\textwidth]{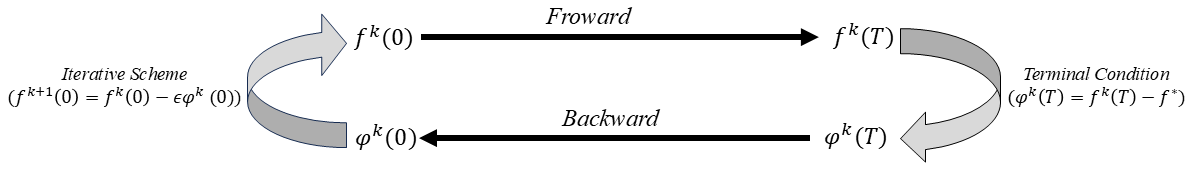}
	\caption{Flowchart of adjoint based method}
	\label{f0} 
	\end{figure}
	
	To solve the above inverse problem, we employ a gradient–adjoint iterative method. In this context, it is necessary to derive the corresponding adjoint equation. The adjoint equation for the adjoint variable $\varphi=\varphi(x,t)$ is given by
	\begin{align}\label{adj}
		\frac{\partial \varphi(x,t)}{\partial t} = -\int_{0}^{x} b (y,x) S(x) \varphi(y)\dd y + S(x) \varphi(x).	
	\end{align}
	with the terminal condition $\ds \varphi(T) = f(T)-f^\ast$. A detailed derivation of equation~\eqref{adj} will be presented in the following section. The adjoint-based gradient method is implemented through an iterative procedure in which the forward evolution equation is solved forward in time, while the corresponding adjoint equation is solved backward in time from \( T \) to \( 0 \). A schematic representation of the overall algorithmic workflow is provided in Figure~\ref{f0}.
	
	Over the past few decades, adjoint-based methods have gained significant attention across various fields of application, particularly in aeronautical engineering \cite{jameson1988aerodynamic, castro2007systematic} and oceanography \cite{power2006adjoint}. From a mathematical perspective, the adjoint equation provides an efficient means of computing the gradient of a cost functional with respect to model parameters or control variables, thereby enabling sensitivity analysis and the solution of optimization problems involving complex dynamical systems. Two classical approaches exist for formulating the adjoint equation backward in time: the continuous and the discrete adjoint formulations. In the continuous approach, one derives the adjoint equation analytically from the continuous forward model, subsequently discretizes it, and then solves it numerically. In contrast, the discrete approach obtains the adjoint equation directly by algebraic differentiation of the discretized forward scheme.
	
	To obtain an accurate approximation of the continuous solution, we implement higher-order numerical schemes for solving the forward fragmentation equation. In particular, we perform a comparative analysis between a second-order finite volume scheme and the weighted finite volume scheme introduced in \cite{kumar2015development}. Alongside developing the adjoint-based gradient algorithm for the inverse design problem in linear fragmentation, one of our objectives is to evaluate the effectiveness of employing numerical schemes of the same order in terms of accuracy. Our observations indicate that, in certain cases, the use of the weighted scheme—though capable of producing a more accurate final-time solution—may lead to substantially higher computational cost. However, this behavior is not universal, and in many situations it is possible to relax the requirement of adopting a higher-order scheme for the forward problem while still achieving an accurate reconstruction of the initial state.
	
\subsection{Contributions and organization of the work}
The structure of the present article is as follows:
\begin{itemize}
	\item After introducing the linear fragmentation model together with the associated inverse design problem, we establish in Section~\ref{S_2} the existence of solutions to the forward model for a general class of kinetic rates. By deriving a priori estimates for higher-order moments and applying the sufficient conditions of the Arzelà–Ascoli theorem, we obtain the main existence result. The section concludes with proofs of continuous dependence and uniqueness under additional assumptions on the fragmentation rates.
	
	\item In section~\ref{S_3} we define the optimal control problem to solve the reversibility of considered fragmentation model. The convergence of the solutions and continuous dependence on the initial data helps us to prove the existence of the optimal control problem. To implement the adjoint-based gradient algorithm, we derive the continuous adjoint equation together with the corresponding gradient-based iterative scheme. After presenting the adjoint formulation, we introduce the main iterative algorithm based on the gradient descent method.
	
	\item Section~\ref{S_4} presents the discrete formulations of both the forward and backward (adjoint) equations. 
	In particular, we implement two numerical schemes—the finite volume scheme (FVS) and the weighted finite volume scheme (WFVS)—to solve the forward problem. In addition, we newly develop a time-reversible numerical scheme for the derived adjoint equation.
	
	\item Finally, Section~\ref{S_5} provides a thorough numerical validation of the proposed algorithm. 
	We verify the consistency of the adjoint-based gradient by performing a Taylor test for gradient verification. 
	Furthermore, we assess the efficiency of the proposed algorithm through two benchmark tests: 
	one corresponding to a linear fragmentation rate and the other to a nonlinear rate.
\end{itemize}

	\section{Existence of the forward problem}\label{S_2}
	To proceed toward establishing the existence of solutions for unbounded kinetic rates, we begin by considering the case in which the fragmentation kernel has compact support. 
	For this purpose, we denote the collection of all continuous function in the weighted space $\ds L^1\big([0,\infty); x^{r}\,\mathrm{d}x\big)$ by $X_r$. For \( r \in \mathbb{R} \), the space $X_r$ 
	endowed with the norm
	\[
	\|f\|_r := \int_{0}^{\infty} x^r \left|f(x) \right|\dd x,
	\]
	In addition, to analyze the kinetic equation under consideration, 
	we define the moment functions of order \( p \) as follows:
	\begin{align*}
		\mathcal{M}_p (t) := \int_{0}^{\infty} x^p f(x,t) \dd x.
	\end{align*}
	Among these moments the zeroth and first order moments describe the total number and mass of the particles present in the system. Therefore it is natural to work in the intersecting space defined as
	\begin{align*}
		X_{0,r} := X_0 \cap X_r =  L^1\big([0,\infty); \left(1+x^{r}\right)\,\mathrm{d}x\big) \quad \text{for some}\quad r\ge 1.
	\end{align*}
	$X_{0,r}$ form a Banach space with respect to the norm 
	\begin{align*}
	\|f\|_{0,r} := \int_{0}^{\infty} \left(1+x^r\right) \left|f(x) \right|\dd x	,
	\end{align*}
	and let \( X_{0,r}^{+} \) denote the positive cone of \( X_{0,r} \). 
	
	\begin{defn}
		Let $T\in \left(0,\infty \right)$. A solution of the problem \eqref{0_1}-\eqref{0_2} is a non negative function $f: [0,T]\to X_{0,r}^+$ for some $r\ge 1$, such that for $x\in (0,\infty)$ a.e.
		\begin{enumerate}
			\item[i.] $s \mapsto f(x,s)$ is continuous on $[0,T]$,
			\item[ii.] for all $t \in [0,T]$, $\ds \int_{x}^{\infty} b(x,y)S(y) f(y) \dd y \in L^1 \left(\left[0,T\right]\right)$, and
			\item[iii.] for all $t \in [0,T]$,
			\[
			f(x,t) = f_0(x) + \int_{0}^{t} \left(\int_{x}^{\infty} b \left(x,y\right)S(y)f(y,s) \dd y - S(x) f(x,s)\right)\dd s.
			\]
		\end{enumerate}  
	\end{defn}
	
	We prove the existence of strong solutions for the multiple fragmentation equation \eqref{0_1} under the following assumptions on the fragmentation rate $S$ and distribution function $b$
	\begin{enumerate}
		\item[$H_1$.] $S(x) \le S_0 x^\alpha$ with $S_0 >0$ and $0 \le \alpha\le  r-1$.
		\item[$H_2$.] There exist a constant $\bar{b} >0$ such that
		\begin{align*}
			xb(x,y) \le \bar{b}\quad \text{for all}\quad 0\le x \le y <\infty.
		\end{align*}
	\end{enumerate}
	
	 First, we need to show that a unique solution to the IVP \eqref{0_1}–\eqref{0_2} exists when the fragmentation rate \( S(x) \) has compact support. In view of this, we state and prove the following lemma.
	\begin{Lemma}[Existence theorem for compactly supported kernel]\label{lemma_1}
		Let \( b \) and \( S \) be nonnegative functions, with \( b \) continuous on \( (0,\infty)^2 \) and \( S \) continuous on \( (0,\infty) \). Assume further that \( S \) has compact support and that \( b \) satisfies condition~\eqref{0_4}. If the initial data \( f_{0} \in X_{0,r}^{+} \) for some $r\ge 1$, then the initial value problem \eqref{0_1}--\eqref{0_2} admits at least one solution
		\(
		f \in \mathcal{C}\big([0,T]; X_{0,r}^{+}\big)
		\)
		for some \( T > 0 \). 
	\end{Lemma}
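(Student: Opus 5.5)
The approach is a contraction-mapping argument on $\mathcal{C}\big([0,T];X_{0,r}\big)$. Let $S$ be supported in $[0,R]$ and set $\|S\|_\infty:=\max_{[0,R]}S<\infty$; continuity on the compact support gives this bound, and I will assume $S$ stays bounded near $0$. Write $\mathcal{A}=\mathcal{B}-\mathcal{L}$, where $\mathcal{L}f=Sf$ and $\mathcal{B}f(x)=\int_x^\infty b(x,y)S(y)f(y)\,\dd y$. The goal is to show that $\mathcal{A}$ is a bounded linear operator on $X_{0,r}$. Then $f(t)=e^{t\mathcal{A}}f_0$, or equivalently the fixed point of
\[
\Phi(f)(t)=f_0+\int_0^t\mathcal{A}f(s)\,\dd s,
\]
exists, is unique and continuous in time. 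Picard iteration is a contraction for $T<1/\|\mathcal{A}\|$, and extends to any $T$.

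\textbf{Boundedness of $\mathcal{A}$.} The term $\mathcal{L}$ is immediate: $\|Sf\|_{0,r}\le \|S\|_\infty\|f\|_{0,r}$. For $\mathcal{B}$, Fubini gives
\[
\|\mathcal{B}f\|_{0,r}\le\int_0^R S(y)|f(y)|\int_0^y(1+x^r)b(x,y)\,\dd x\,\dd y.
\]
Since $x\le y\le R$ and $r\ge1$, we have $x^r\le R^{r-1}x$. Hence \eqref{0_4} gives $\int_0^y x^r b\,\dd x\le R^{r-1}y$. The zeroth-order part needs $\nu(y)=\int_0^y b(x,y)\,\dd x$ bounded on $\operatorname{supp}S$. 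I would assume this via \eqref{0_3} with $\nu$ bounded on compacts, which is implicit in the paper's standing assumptions. Otherwise $\mathcal{B}f$ can fail to lie in $X_0$.

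This is the main obstacle. Condition \eqref{0_4} alone does not control the number of fragments, so some bound on $\nu$ over $[0,R]$ is needed. Under $H_2$, $b\le \bar b/x$ is not integrable at $0$, so $H_2$ does not help here. If no bound on $\nu$ is available, I would try working in $X_1$ first and then recovering the $X_0$ part a posteriori. Given the bound, $\|\mathcal{B}f\|_{0,r}\le \|S\|_\infty\big(\sup_{[0,R]}\nu+R^{r-1}R\big)\|f\|_{0,r}$.

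\textbf{Nonnegativity.} Write $f(t)=e^{-tS}f_0+\int_0^t e^{-(t-s)S}\mathcal{B}f(s)\,\dd s$. The iteration of this mild form starting from $f_0\ge0$ maps nonnegative functions to nonnegative functions, since $\mathcal{B}$ is positive. It is a contraction for small $T$, so its fixed point, which coincides with the solution found above, lies in $X_{0,r}^+$. Iterating on successive intervals of equal length extends this to every $T>0$.

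\textbf{Remaining properties.} Continuity of $s\mapsto f(x,s)$ for a.e.\ $x$ and the integrability in part (ii) of the definition follow from the integral formula, since $\mathcal{B}f\in\mathcal{C}([0,T];X_{0,r})$.
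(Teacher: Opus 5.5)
Your argument is correct once $\nu$ is bounded on $\operatorname{supp}S$, the assumption you flag. The comparison with the paper is short, because the paper does not prove this lemma: it calls it standard and refers to Theorem~2.1 of \cite{das2022existence}.

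Your route is self-contained. You view $\mathcal{A}=\mathcal{B}-\mathcal{L}$ as a bounded operator on $X_{0,r}$, using $x^r\le R^{r-1}x$ on $[0,R]$ together with \eqref{0_4}. You then get positivity from the Duhamel form with the multiplication semigroup $e^{-tS}$. This gives more than the statement claims: existence and uniqueness on \emph{every} interval $[0,T]$, with $f\in\mathcal{C}^1([0,T];X_{0,r})$. That is what the proof of Theorem~\ref{th1} actually uses, since the truncated solutions $f_n$ are needed on an arbitrary fixed $[0,T]$, whereas the lemma only asserts existence for some $T>0$. Your assumption that $S$ is bounded near $0$ is automatic under $H_1$.

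The extra hypothesis you isolate is not an artifact of your method; the lemma as written is false without it. Consequently your fallback (work in $X_1$, then recover the $X_0$ bound afterwards) cannot succeed in general. Take $b(x,y)=\frac{2}{x}-\frac{2}{y}$ for $0<x<y$ and $b=0$ otherwise. It is nonnegative, continuous on $(0,\infty)^2$, and satisfies \eqref{0_4} and even $H_2$ with $\bar b=2$, yet $\int_0^y b(x,y)\,\dd x=\infty$. Since $(y-x)/y\ge\frac12$ for $y\ge 2x$, you get $\mathcal{B}g(x)\ge\frac1x\int_{2x}^\infty S(y)g(y)\,\dd y$ for $g\ge 0$. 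The Duhamel representation of any nonnegative solution gives $f(\cdot,s)\ge e^{-sS}f_0$, and therefore
\[
f(x,t)\ \ge\ \frac{t\,e^{-t\|S\|_\infty}}{x}\int_{2x}^{\infty}S(y)f_0(y)\,\dd y .
\]
This is not integrable at $x=0$ as soon as $\int_0^\infty S f_0\,\dd x>0$, so the solution leaves $X_0$ instantly. A bound on $\nu$ over $\operatorname{supp}S$, which you take from \eqref{0_3}, is therefore genuinely necessary.

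One small addition if you read $X_{0,r}$ literally as the paper defines $X_r$, i.e.\ as continuous functions; Lemma~\ref{lemaa_2} relies on this. Continuity of $b$ and $S$ on compacts of $(0,\infty)^2$ makes $\mathcal{B}g$ continuous on $(0,\infty)$ for every $g\in X_{0,r}$, by dominated convergence. The Duhamel formula then transfers continuity in $x$ from $f_0$ to $f(\cdot,t)$.
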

	The proof of the above lemma is standard, and the existence and uniqueness of solutions follow directly from Theorem~2.1 in \cite{das2022existence}, to which we refer for further details.
	
	Now we construct a sequence of fragmentation kernels \(\{S_n\}_n\) with the help of above compactly supported kernel $S$.
%
	\[
	S_n(x) = \begin{cases} S(x), & 0\le x\le n,\\ \text{smoothly decreasing to }0, & x\in[n,n+1],\\ 0, & x\ge n+1. \end{cases}
	\]
	From the above construction, it is evident that \( S_n \) increases monotonically to \( S \) and has compact support. For this \emph{cutoff kernel}, the existence of continuous, non-negative solutions \( f_n \) for each \( n \) on the compact domain is guaranteed by Lemma~\ref{lemma_1}. In this setting, the modified problem takes the following form:
	
	\begin{align}\label{1_1}
		\frac{\partial f_n(x,t)}{\partial t} =\int_{x}^{\infty} b(x,y)S_n(y)f_n(y,t) \dd y - S_n(x) f_n(x,t), ~ \text{with} ~ x \in \mathbb{R}_+.
	\end{align}
	with the initial data
	\[f_n(x,0) = \begin{cases} f_0(x), & 0\le x\le n,\\ 0, & x>n. \end{cases}\]
	
	Let us denote the $p-$th order moment corresponding to the solution $f_n$ as
	\begin{align*}
		\mathcal{M}_{p,n} (t) := \int_{0}^{\infty} x^p f_n(x,t) \dd x.
	\end{align*}
	
	\subsection{Prior estimation of the moments}
	Consider \( f(x,t) \) to be a solution of the forward equation \eqref{1_1} in 
	\(\mathcal{C}\!\left([0,T]; X_{0,r}^+\right)\) for some fixed \( T>0 \) and \( r\ge 1\). 
	To estimate the moment functions, we rewrite equation \eqref{1_1} in its moment form. 
	To this end, we multiply \eqref{1_1} by a test function \( \phi(x) \) and integrate with respect to \( x \).
	\begin{align}\label{mom_eq}
		\partial_t \int_{0}^{\infty} \phi(x) f_n(x,t) \dd x = \int_{0}^{\infty} \psi_\phi(y)  S_n(y) f_n(y,t) \dd y 
	\end{align}
	where $\ds \psi_\phi(y) := \int_{0}^{y} \phi(x) b(x,y) \dd x -\phi(y)$. 
	
	\begin{Proposition}\label{prop_1}
		Let $f_{0} \in X_{0,r}^+$ for some $r\ge 1$ and consider $f_n$ be the solution to equation \eqref{1_1} on $[0,T]$ such that
		\begin{align}
			\int_{0}^{\infty}xS_n(x)f_n(x,s)\dd x\in L^1([0,T]) .\label{2_15}
		\end{align}
		Then the solution is mass conserving, that is
		\begin{align*}
			\mathcal{M}_{1,n}(t)=\mathcal{M}_{1}(0):=\bar{M}_1 \quad \text{for all}\quad t \in [0,T].
		\end{align*}
	\end{Proposition}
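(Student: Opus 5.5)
The plan is to test the weak (moment) formulation \eqref{mom_eq} with the weight $\phi(x)=x$ and to use the mass-conservation identity \eqref{0_4}. For this choice,
\[
\psi_x(y)=\int_0^y x\,b(x,y)\,\dd x - y = 0\qquad\text{for every } y>0,
\]
so formally $\partial_t\mathcal{M}_{1,n}(t)=0$. The work lies in justifying \eqref{mom_eq} for the unbounded weight $\phi(x)=x$. Fubini and the interchange of $\partial_t$ with $\int_0^\infty$ are not automatic, because the gain and loss terms could each fail to be integrable.

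To handle this I will first work with bounded truncated weights $\phi_R(x)=\min(x,R)$ for $R>0$. I integrate the equation \eqref{1_1} in its integral form (item iii of the definition of solution) against $\phi_R$ over $(0,\infty)\times(0,t)$. Since $f_n\in\mathcal{C}([0,T];X_{0,r}^+)$ and $S_n$ is bounded with compact support, all terms are absolutely integrable for fixed $R$, so Fubini–Tonelli applies. Swapping the order of integration in the gain term, $\{0<x<y\}$, gives
\[
\int_0^\infty \phi_R(x) f_n(x,t)\,\dd x=\int_0^\infty \phi_R(x) f_n(x,0)\,\dd x+\int_0^t\!\int_0^\infty \psi_{\phi_R}(y)\,S_n(y)\,f_n(y,s)\,\dd y\,\dd s .
\]

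Next I bound $\psi_{\phi_R}$ using \eqref{0_3}–\eqref{0_4}. For $y\le R$ we have $\psi_{\phi_R}(y)=0$. For $y>R$,
\[
-\,y\;\le\;\psi_{\phi_R}(y)=\int_0^y \min(x,R)\,b(x,y)\,\dd x - R\;\le\; y-R\le y .
\]
So $|\psi_{\phi_R}(y)|\le y\,\mathbf 1_{\{y>R\}}$. Consequently the time-integrated source term is dominated by $\int_0^t\int_R^\infty yS_n(y)f_n(y,s)\,\dd y\,\dd s$. By hypothesis \eqref{2_15} this quantity tends to $0$ as $R\to\infty$, by dominated convergence in $(y,s)$. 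On the left side, and for the initial term, $\phi_R(x)\uparrow x$, and monotone convergence (with $\mathcal{M}_{1,n}(t)<\infty$ since $r\ge1$) gives convergence to $\mathcal{M}_{1,n}(t)$ and $\mathcal{M}_{1,n}(0)$ respectively. Hence $\mathcal{M}_{1,n}(t)=\mathcal{M}_{1,n}(0)$ for all $t\in[0,T]$.

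The main obstacle is this limiting step: showing that the truncated source term vanishes. Hypothesis \eqref{2_15} is exactly what is needed, and the sign-free bound $|\psi_{\phi_R}|\le y\mathbf 1_{y>R}$ is the key estimate.

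There is one bookkeeping point about the initial datum. Since $f_n(\cdot,0)=f_0\mathbf 1_{[0,n]}$, the argument gives conservation of $\mathcal{M}_{1,n}(0)=\int_0^n xf_0\,\dd x$. This equals $\bar M_1$ up to the tail $\int_n^\infty xf_0\,\dd x$, which tends to $0$ as $n\to\infty$. I will interpret $\bar M_1$ accordingly, or note that it is the truncated mass. This discrepancy is harmless for the later limit $n\to\infty$.
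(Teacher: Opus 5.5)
Your proposal is correct and follows essentially the same route as the paper. The paper truncates the weight as $\phi_M(x)=x\chi_{(0,M)}(x)$, so that $\psi_{\phi_M}(y)=0$ for $y\le M$ and $0\le\psi_{\phi_M}(y)\le y$ otherwise, where you use $\min(x,R)$; it then removes the source term by dominated convergence via \eqref{2_15} and passes to the limit in the moments, as you do. Your bookkeeping remark is also justified: since $f_n(\cdot,0)=f_0\chi_{[0,n]}$, the argument conserves $\int_0^n x f_0(x)\,\dd x$ rather than $\bar{M}_1$, a point the paper's proof glosses over by writing $f_0$ in place of $f_n(\cdot,0)$ in the limit.
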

	
	\begin{proof}
		Let $M$ be positive integer. We consider $\ds \phi_M(x) := x \chi_{(0,M)}(x)$ and apply the relation  \ref{mom_eq} whenever $\ds \phi_M(x) \in L^\infty(0,\infty)$ to get
		\begin{align}
			\int_{0}^{\infty} \phi_M(x) f_n(x,t)\dd x = \int_{0}^{\infty} \phi_M(x) f_0(x)\dd x+\int_{0}^{t}\int_{0}^{\infty}\psi_{\phi_M}(y) S_n(y) f_n(y,t)\dd y  \dd s .\label{prop_1.1}
		\end{align}
		
		Now the following cases arise;
		
		\begin{itemize}
			\item[a)] $y \le M$, $\implies$ $\ds \psi_{\phi_M}(y)= \int_{0}^{y}xb(x,y)\dd x-y=0$,
			\item[b)] $y > M$,  $\implies$ $\ds  \psi_{\phi_M}(y,z)=y-\int_{M}^{y}xb(x,y)\dd x \in (0,y)$.
		\end{itemize}
		
		Combining all the cases, we get
		
		\begin{align*}
			0\le \psi_{\phi_M}(y) \le y \quad\text{for all}\quad (y,z)\in (0,\infty)^2.
		\end{align*}
		
		Using the above inequality on equation \eqref{prop_1.1}
		
		\begin{align}\label{prop_1.2}
			\int_{0}^{\infty} \phi_M(x) f_n(x,t)\le  \int_{0}^{\infty} \phi_M(x) f_0(x)+ J^*_1\le \M_1(0)+J^*_1 \quad\text{for all}\quad t\in [0,T],
		\end{align}
		with $\ds J^*_1 := \int_{0}^{t}\int_{0}^{\infty}yS(y)f_n(y,s)\dd y  \dd s$.
		
		From equation \eqref{2_15}, it is clear that $J^*_1$ is finite and since $f_{0} \in X_1^+$, the right hand side of the inequality \eqref{prop_1.2} is finite and does not dependent on $M$. Letting $\ds M \longrightarrow \infty$ on \eqref{prop_1.2} and considering Fatou's theorem, we get $ \M_{1}(t)<\infty$, with $\ds \M_{1}(t) \le \M_1(0)+J^*_1$ for all $t \in [0,T]$. 
		
		Additionally, the following results
		
		\begin{align*}
			&\lim\limits_{M\to \infty} \psi_{\phi_M}(y)=0 \quad\text{for all}\quad y\in (0,\infty),\quad\text{and}\quad\\
			&\psi_{\phi_M}(y)S_n(y)f_n(y,s)\le yS_n(y)f_n(y,s)\quad\text{for all}\quad(y,s)\in (0,\infty)\times(0,t),
		\end{align*}
		together with equation \eqref{2_15}, the Lebesgue dominated convergence theorem gives
		
		\begin{align*}
			\lim\limits_{M\to \infty}\int_{0}^{t}\int_{0}^{\infty}\int_{0}^{\infty}\psi_{\phi_M}(y) S_n(y)f_n(y,s)\dd y \dd s=0.
		\end{align*}
		
		Consequently, with the help of relation \ref{mom_eq} we conclude that
		
		\begin{align}
			\M_{1,n}(t)= \lim\limits_{M \to \infty} \int_{0}^{\infty} \phi_M(x) f_n(x,t)\dd x=\lim\limits_{M\to \infty} \int_{0}^{\infty} \phi_M(x) f_0(x)\dd x=\M_1(0)=\bar{M}_1.\label{2_16}
		\end{align}
	\end{proof}
	
	We now prove the uniform boundedness of all moments for $p\ge 2$.
	
	\begin{Proposition}\label{prop_2}
		Let $f_{0} \in X_{0,r}^+$ for some $r\ge 1$ and consider the mass conserving solution $f_n$ to equation \eqref{1_1} on $[0,T]$. If the breakage distribution function $b$ satisfies the mass conserving condition \eqref{0_4}, then
		
		\begin{align*}
		\M_{i,n}(t) \le \M_i(0):=\bar{M}_i\quad\text{for all}\quad i\ge 2.
		\end{align*}
	\end{Proposition}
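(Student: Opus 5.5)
We plan to apply the moment identity \eqref{mom_eq} with the test function $\phi(x)=x^i$, $i\ge 2$, and show that the corresponding $\psi_\phi$ is nonpositive. The key pointwise fact is that for $0\le x\le y$ and $i\ge 2$ (indeed for any $i\ge 1$) we have $x^i = x\cdot x^{i-1}\le x\, y^{i-1}$. Together with \eqref{0_4} this gives
\begin{align*}
\psi_{x^i}(y)=\int_0^y x^i b(x,y)\,\dd x - y^i \le y^{i-1}\int_0^y x\, b(x,y)\,\dd x - y^i = y^{i-1}\cdot y - y^i = 0 .
\end{align*}
Since $S_n\ge 0$ and $f_n\ge 0$, the right-hand side of \eqref{mom_eq} is then nonpositive. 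Hence $t\mapsto \M_{i,n}(t)$ is nonincreasing, and $\M_{i,n}(t)\le \M_{i,n}(0)=\int_0^n x^i f_0(x)\,\dd x\le \M_i(0)=\bar M_i$.

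The main obstacle is justifying \eqref{mom_eq} for the unbounded test function $x^i$. A priori we do not know that $\M_{i,n}(t)$ is finite, nor that $\int_0^\infty y^i S_n(y) f_n(y,t)\,\dd y$ is integrable in time. I would handle this exactly as in Proposition~\ref{prop_1}, by truncation.

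First, take $\phi_M(x)=x^i\chi_{(0,M)}(x)\in L^\infty$ and apply \eqref{mom_eq} in integrated form, as in \eqref{prop_1.1]}. Then compute $\psi_{\phi_M}(y)$ in two cases.
\begin{itemize}
\item[a)] For $y\le M$, $\psi_{\phi_M}(y)=\psi_{x^i}(y)\le 0$ by the estimate above.
\item[b)] For $y>M$, $\psi_{\phi_M}(y)=\int_0^M x^i b(x,y)\,\dd x\ge 0$. This term is the difficulty, and it needs the compact support of $S_n$: $S_n(y)=0$ for $y\ge n+1$. Choosing $M\ge n+1$ makes case b) contribute nothing.
\end{itemize}
So for all $M\ge n+1$,
\begin{align*}
\int_0^M x^i f_n(x,t)\,\dd x\le \int_0^M x^i f_0(x)\,\dd x\le \bar M_i .
\end{align*}
Letting $M\to\infty$ by monotone convergence gives $\M_{i,n}(t)\le \bar M_i$. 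This also yields finiteness directly, without any integrability hypothesis.

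As a consistency check, the mass-conserving assumption and Proposition~\ref{prop_1} are only needed to guarantee that $f_n$ is a legitimate solution for which the weak formulation holds. If $f_n$ has support possibly extending beyond $n+1$ through its initial data, that is harmless: $f_n(\cdot,0)$ vanishes for $x>n$, and fragmentation only moves mass to smaller sizes. The truncated identity with $M\ge n+1$ handles this automatically.
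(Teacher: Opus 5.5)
Your argument is correct, but it takes a different route from the paper's. The paper never uses the compact support of $S_n$. It tests \eqref{mom_eq} with $\phi_M(x)=(x^i-M^{i-1}x)\chi_{(0,M)}(x)$, which is nonpositive, so that $\psi_{\phi_M}(y)\le 0$ for \emph{every} $y$. For $y<M$ this follows from the same bound $x^{i-1}\le y^{i-1}$ combined with \eqref{0_4} that you use; for $y>M$ it holds simply because $\phi_M\le 0$ on $(0,M)$. This gives $\int_0^M (x^i-M^{i-1}x)\,(f_n(x,t)-f_0(x))\,\dd x\le 0$. Adding $M^{i-1}$ times the conserved first moment then yields $\int_0^\infty \min\{x^i,M^{i-1}x\}\,f_n(x,t)\,\dd x\le \bar M_i$, and the claim follows as $M\to\infty$. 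So the paper cancels the positive tail term of your case b) by subtracting a multiple of $x$ and paying with mass conservation; this is why ``mass conserving'' appears in the statement. You instead cancel it by taking $M\ge n+1$, so that $S_n$ vanishes on the tail. The paper's version is more robust: it applies to any mass-conserving solution regardless of the support of the fragmentation rate, e.g.\ directly to solutions of the untruncated problem. Yours is more elementary and self-contained for the cutoff problem. It needs neither mass conservation nor Proposition~\ref{prop_1} with its integrability condition \eqref{2_15}, and it yields finiteness of $\M_{i,n}(t)$ with a bound uniform in $n$, which is all the statement requires.
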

	
	\begin{proof}
		Consider $M>0$ and the relation \ref{mom_eq} for $\ds \phi_M (x) := (x^i-M^{i-1}x) \chi_{(0,M)}(x)$ where $x \in (0,\infty)$. Then equation \eqref{0_1} leads to the following cases;
		
		\begin{itemize}
			\item[i)] when $y\in (0,M)$,
			
			\begin{align*}
				\ds \psi_{\phi_M}(y) & = \int_{0}^{y}(x^i-M^{i-1}x)b(x,y)\dd x-(y^i-M^{i-1}y)\\
				& \le (y^{i-1}-M^{i-1})\int_{0}^{y}xb(x,y)\dd x  -(y^i-M^{i-1}y)=0,
			\end{align*}
			
			\item[ii)] when $y\in (M,\infty)$, $\ds  \psi_{\phi_M}(y) \le (M^{i-1}-M^{i-1})\int_{0}^{M}xb(x,y)\dd x  =0$.
		\end{itemize}
		
		Since $\phi \in L^\infty(0,\infty)$, then combining all cases we can conclude from the relation \eqref{mom_eq} that
		
		\begin{align*}
			\int_{0}^{M}(x^i-M^{i-1}x)\left[f_n(x,t)-f_{0}(x)\right]\dd x \le 0 \quad\text{for all}\quad t \in [0,T].
		\end{align*}
		
		Using the mass conserving property of $f_n$ on the above inequality, we get
		
		\begin{align*}
			\int_{0}^{\infty}\min\{x^i,M^{i-1}x\}f(x,t)\dd x \le \int_{0}^{\infty}\min\{x^i,M^{i-1}x\}f_0(x)\dd x \le \M_i(0)\quad\text{for all}\quad t \in [0,T].
		\end{align*}
		
		In particular, for all $M>0$
		
		\begin{align*}
			\int_{0}^{M} x^i f_n(x,t)\dd x \le \M_i(0).
		\end{align*}
		
		Passing $M \longrightarrow \infty$ on the above inequality, we can complete the proof.
	\end{proof}
	Using the Proposition  standard change of integration and the fact that $\ds f_0 \in X_{0}^+$, we can also estimate the zeroth order moment as
	\begin{align}
		\mathcal{M}_{0,n} (t) \le \bar{M}_0\quad\text{for all}\quad t \in [0,T].
	\end{align}
Using the previously obtained moment estimates, we now establish the boundedness and equicontinuity of the solution sequence on compact subsets with respect to the uniform topology. In this direction, we state and prove the following lemma.

\begin{Lemma}\label{lemaa_2}
		Let $T>0$, and suppose that the kernels $S(x)$ and $b(x,y)$ satisfy assumption $(H_1)$ and $(H_2)$, respectively. If the initial data $f_0(x)\in X_{0,r}^{+}$ for some $r\ge 1$, then the sequence of solutions \( \{f_n\} \) is relatively compact in the space of continuous functions endowed with the uniform topology on any compact rectangular 
	subset of 
	\[
	\Xi := \{(x,t):\, 0 < x < \infty,\; 0 \le t \le T\}.
	\]
\end{Lemma}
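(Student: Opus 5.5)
The argument is the Arzelà--Ascoli theorem applied on a fixed compact rectangle $K=[x_1,x_2]\times[0,T]\subset\Xi$ with $0<x_1<x_2<\infty$. We need two things: uniform boundedness of $\{f_n\}$ on $K$, and equicontinuity of $\{f_n\}$ on $K$ in both $x$ and $t$. Both estimates should be independent of $n$.

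\textbf{Uniform bound.} Write the mild form of \eqref{1_1}. Since $S_n\ge 0$, we have
\begin{align*}
f_n(x,t)\le f_0(x)+\int_0^t\int_x^\infty b(x,y)S_n(y)f_n(y,s)\,\dd y\,\dd s .
\end{align*}
By $(H_2)$, $b(x,y)\le \bar b/x\le \bar b/x_1$ for $x\ge x_1$. By $(H_1)$, $S_n(y)\le S(y)\le S_0y^\alpha\le S_0(1+y^{r-1})$. Propositions~\ref{prop_1} and \ref{prop_2} give that $\int_0^\infty(1+y^{r-1})f_n\,\dd y$ is bounded by $\bar M_0+\bar M_1+\bar M_{r-1}$; interpolation handles the case where $r-1$ is not an integer or is less than $2$. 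Hence the gain term is bounded by $C(x_1)T$, uniformly in $n$. Since $f_0$ is continuous, it is bounded on $[x_1,x_2]$, and so $\sup_K f_n\le C$.

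\textbf{Equicontinuity in $t$.} For $t_1<t_2$,
\begin{align*}
|f_n(x,t_2)-f_n(x,t_1)|\le\int_{t_1}^{t_2}\Big(\frac{\bar b}{x_1}S_0\sup_s\|f_n(s)\|_{0,r}+\max_{[x_1,x_2]}S\cdot\sup_K f_n\Big)\dd s\le C|t_2-t_1|.
\end{align*}
Here we use the uniform bound just obtained and the continuity of $S$ on the compact interval $[x_1,x_2]$.

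\textbf{Equicontinuity in $x$.} This is the step I expect to be the main obstacle, because it requires a modulus of continuity of $f_n(\cdot,t)$ in $x$ that is uniform in $n$. The plan is to use the representation obtained by Duhamel's formula,
\begin{align*}
f_n(x,t)=e^{-S_n(x)t}f_n(x,0)+\int_0^t e^{-S_n(x)(t-s)}G_n(x,s)\,\dd s,
\end{align*}
where $G_n(x,s)=\int_x^\infty b(x,y)S_n(y)f_n(y,s)\,\dd y$. Taking $n>x_2$ gives $S_n=S$ and $f_n(\cdot,0)=f_0$ on $[x_1,x_2]$, so the first term is uniformly continuous independently of $n$. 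For $G_n$, take $x<x'$ and split
\begin{align*}
G_n(x,s)-G_n(x',s)=\int_x^{x'}b(x,y)S(y)f_n\,\dd y+\int_{x'}^\infty\big(b(x,y)-b(x',y)\big)S_n(y)f_n\,\dd y .
\end{align*}
The first piece is at most $(\bar b/x_1)\max_{[x_1,x_2]}S\cdot C|x'-x|$, by the uniform bound on $K$, where the integration range is enlarged slightly as needed. For the second piece, fix $R$ large and cut at $y=R$. The tail $y>R$ is at most $(2\bar b/x_1)S_0R^{\alpha-(r-1)}\cdots$, controlled via the moment bound on $\M_{r}$ so that it is small uniformly in $n$; if $\alpha=r-1$ exactly, I would invoke a slightly higher moment, or use de la Vallée-Poussin uniform integrability of $f_0$ propagated as in Proposition~\ref{prop_2}. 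On the bounded region $[x_1,x_2]\times[x_1,R]$, $b$ is uniformly continuous, so that part is at most $\omega_R(|x-x'|)\,S_0R^\alpha\bar M_0$. The exponential factors $e^{-S(x)(t-s)}$ are Lipschitz in $x$ with respect to the continuous function $S$.

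Combining the three estimates gives a common modulus of continuity on $K$. Arzelà--Ascoli then yields relative compactness in $\mathcal C(K)$. A diagonal argument over an exhausting sequence of rectangles gives the statement on every compact subset of $\Xi$.
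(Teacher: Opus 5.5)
Your proposal is correct and follows essentially the paper's route: a uniform bound from $(H_2)$ and the moment estimates, a Lipschitz bound in $t$, and equicontinuity in $x$ by splitting the gain term into a moment-controlled tail plus a compact region where $b$ is uniformly continuous, then Arzel\`a--Ascoli; the only real difference is that you absorb the loss term $S(x)f_n(x,s)$ through Duhamel's formula, whereas the paper applies Gronwall's inequality to $\omega_n(t)=\sup_{|x'-x|<\delta}|f_n(x',t)-f_n(x,t)|$. Two small repairs: the tail satisfies $\int_R^\infty y^\alpha f_n\,\dd y\le R^{\alpha-r}\bar M_r$ with $\alpha-r\le -1$, so the case $\alpha=r-1$ needs no extra moment or de la Vall\'ee-Poussin argument, and you should not rely (as the paper also does) on $\M_{0,n}(t)\le\bar M_0$, which fails in general for fragmentation because the particle number increases; since every integral you need runs over $y\ge x_1>0$, you can instead use $y^\alpha\le x_1^{-1}(y+y^r)$ there and work only with $\bar M_1$ and $\bar M_r$.
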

	
	\begin{proof}
		The proof of this lemma proceeds in the following steps:
		\begin{enumerate}
			\item First, we establish the uniform boundedness of the sequence \( \{f_n\} \) on a compact subset of \( \Xi \).
			\item Next, we prove that the sequence \( \{f_n\} \) is equicontinuous with respect to both the volume variable \( x \) and the time variable \( t \) on the same compact subset of \( \Xi \).
		\end{enumerate}
		For this purpose, we introduce a family of compact rectangular subsets of \( \Xi \) defined by
		
		\begin{align*}
			\Xi_1 :=  \{(x,t):\frac{1}{X}\le x \le X,~ 0\le t \le T\} 
		\end{align*}
		Where $X$ $(\gg 1)$ is finite number.
		\subsection{Uniform boundedness:}
		Recalling \eqref{1_1} and $\alpha\le 1$, for $(x,t) \in \Xi_1$
		\begin{align}\label{1_6}
			\frac{\partial f_n(x,t)}{\partial t}& \le \int_{x}^{\infty} b(x,y)S_n(y)f_n(y,t) \dd y \notag \\
			& = \bar{b} \int_{1/X}^{\infty} \frac{1}{x}S_n(y)f_n(y,t) \dd y \notag\\
			& \le XS_0 \bar{b} \int_{1/X}^{\infty} y^\alpha f_n(y,t) \dd y \notag \\
			& \le  X S_0 \bar{b} \left(\bar{M}_0 + \bar{M}_{\lceil{\alpha}\rceil}\right).
		\end{align}
		Integrating the above inequality and set $\bar{f}_0 := \max_{1/X \le x \le X} f_0(x)$, we have
		\begin{align}\label{bdd}
			f_n(x,t) \le \bar{f}_0+ X S_0 \bar{b}T \left(\bar{M}_0 + \bar{M}_{\lceil{\alpha}\rceil}\right) := \Gamma_1
		\end{align}
		Since, $\Gamma_1$ in independent of $n$ and $t$, so the solution $\{f_n\}$ is uniformly bounded on $\Xi_1$.
		
		\subsection{Time equicontinuity}
		Let \(0 \le t \le t' \le T\) and \(\epsilon > 0\).  
		To establish the equicontinuity of \(f_n\) with respect to \(t\) on each set \(\Xi_1\), we must show that for every arbitrary \(\epsilon > 0\), there exists a \(\delta(\epsilon) > 0\) such that
		
		\begin{align*}
			|t'-t| < \delta(\epsilon) \quad \mbox{implies} \quad \left|f_n(x,t') - f_n(x,t)\right| < \epsilon, \quad \text{for all}\quad \frac{1}{X}\le x \le X.
		\end{align*}
		
		Now from equation \eqref{1_1}, we write for each $n\ge 1$
		
		\begin{align*}
			|f_n(x,t') - f_n(x,t)| \le \int_t^{t'} & \left| \int_{x}^{\infty} b(x,y)S_n(y)f_n(y,s) \dd y - S_n(x) f_n(x,s)\right| \dd s.
		\end{align*}
		Recalling the uniform boundedness of $\{f_n\}$ and the estimation \eqref{1_6}, the above inequality further estimated as:
		\begin{align*}
			|f_n(x,t') - f_n(x,t)| \le \int_t^{t'}\left[ X S_0 \bar{b} \left(\bar{M}_0 + \bar{M}_{\lceil{\alpha}\rceil}\right) + S_0 X^\alpha \Gamma_1 \right]\dd s
		\end{align*}
		Consider the constant $\ds \zeta_1:= X S_0 \bar{b} \left(\bar{M}_0 + \bar{M}_{\lceil{\alpha}\rceil}\right) + S_0 X^\alpha \Gamma_1$. Therefore, for $\epsilon >0$ there exist a $\delta(\epsilon)$ such that for $n\ge 1$;
		\begin{align}\label{1_10}
			|f_n(x,t')-f_n(x,t)| \leq \zeta_1 \epsilon \quad \text{whenever} \quad |t'-t| < \delta(\epsilon). 
		\end{align}
		
	\subsection{Space equicontinuity}
	
	Let \(\frac{1}{X} \le x < x' \le X\).  
	To establish the equicontinuity of \(f_n\) with respect to the spatial variable \(x\), we need to show that for every arbitrary \(\epsilon > 0\), there exists a \(\delta(\epsilon) \le \epsilon\) such that
	
	\[
	|x' - x| < \delta(\epsilon)
	\quad \Longrightarrow \quad
	\left| f_n(x',t) - f_n(x,t) \right| < \epsilon,
	\qquad \text{for all } 0 \le t \le T.
	\]
	
	By construction, each of the kernels \(S_n\) is continuous on the closed interval \([0,X]\).  
	We also consider a bounded domain \(0 < Z_1 \le y \le Z_2 < \infty\), where the values of \(Z_1\) and \(Z_2\) will be specified in the subsequent analysis.  
	
	Furthermore, the fragmentation kernel \(b\) is continuous on the rectangle 
	\([\tfrac{1}{X}, X] \times [Z_1, Z_2]\).  
	Using the continuity of the initial data and the kinetic kernels, we obtain the following relations whenever \(0 < \delta(\epsilon) < \epsilon\):
	
		\begin{align*}
			\sup_{|x'-x|<\delta} |f_0(x')-f_0(x)|<\epsilon,
		\end{align*}
		and
		\begin{align*}
			\sup_{|x'-x|<\delta} \left|S_n(x')-S_n(x)\right|<\epsilon, \quad \sup_{|x'-x|<\delta} \left|b(x',y)-b(x,y)\right| < \epsilon.
		\end{align*}
		Now for each $n\ge 1$, we can write
		\begin{align}\label{1_7}
			\begin{split}
				\left|f_n(x',t)-f_n(x,t)\right| &  \le  \left|f_0(x')- f_0(x)\right| + \int_{0}^{t}\left[\int_{x}^{\infty} \left|b(x',y) - b(x,y)\right|S_n(y) f_n(y,s)\dd y\right.\\
				& \left. + \int_{x}^{x'}b(x',y)S_n(y) f_n(y,s) \dd y + \left|S_n(x') - S_n(x)\right|f_n(x',s)\right.\\
				&\left. + S_n(x)\left|f_n(x',s) - f_n(x,s)\right|\right]\dd s.
			\end{split}
		\end{align}
		Define
		\begin{align*}
			\omega_n(t) := \sup_{|x'-x|<\delta} \left|f_n(x',t)-f_n(x,t)\right|, \quad \frac{1}{X}\le x,x' \le X.
		\end{align*}
		We first estimate the first integral of the right hand side
		\begin{align*}
			\int_{x}^{\infty} \left|b(x',y) - b(x,y)\right|S_n(y) f_n(y,t)\dd y \le& \int_{x}^{Z_1} \left|b(x',y) - b(x,y)\right|S_n(y) f_n(y,s)\dd y \\
			&+ \int_{Z_1}^{Z_2} \left|b(x',y) - b(x,y)\right|S_n(y) f_n(y,s)\dd y \\
			&+\int_{Z_2}^{\infty} \left|b(x',y) - b(x,y)\right|S_n(y) f_n(y,s)\dd y:= J_1 +J_2 +J_3.
		\end{align*}
		\begin{align*}
			J_1 \le 2\bar{b} \int_{x}^{Z_1} \frac{1}{x} S_n(y)f_n(y,s)\dd y \le 2\bar{b}  S_0  \bar{M}_1X Z_1^{\alpha-1}.
		\end{align*}
		\begin{align*}
			J_2 \le \epsilon \int_{Z_1}^{Z_2} S_n(y) f_n(y,s)\dd y \le S_0 \bar{M}_{\lceil{\alpha}\rceil} \epsilon.
		\end{align*}
		\begin{align*}
			J_3 \le 2 \bar{b}\int_{Z_2}^{\infty} \frac{1}{x} y^\alpha f_n(y,s)\dd y  \le  \frac{2\bar{b}S_0 X}{Z_2} \bar{M}_{\lceil{1+\alpha}\rceil}
		\end{align*}
		If we choose $Z_1$ and $Z_2$ such a way that, they satisfy the relation $\bar{M}_1 Z_1^{\alpha-1}\le \epsilon$ and $\frac{1}{Z_2} \bar{M}_{\lceil{1+\alpha}\rceil} \le \epsilon$ respectively. Then using these relation on the above estimations of $J_1$, $J_2$ and $J_3$ we get
		\begin{align}
			\int_{x}^{\infty} \left|b(x',y) - b(x,y)\right|S_n(y) f_n(y,t)\dd y \le \zeta_2 \epsilon,
		\end{align}
		where $\ds \zeta_2 := S_0 \left(4\bar{b}X+ \bar{M}_{\lceil{\alpha}\rceil}\right)$.
		
		The second integral of the right hand side \eqref{1_7} as follows
		\begin{align}
			 \int_{x}^{x'} b(x',y) S_n(y) f_n(y,s)\dd y \le \bar{b} S_0 X \int_{x}^{x'}y^\alpha f_n(y,s)\dd y \le\bar{b} S_0 X  \Gamma_1 \epsilon.
		\end{align}
		Using of these estimation together with the uniform boundedness of $\{f_n\}$ on the inequality \eqref{1_7} leads us to the following estimation
		\begin{align*}
			\omega_n(t) \le \zeta_3 \epsilon + \zeta_4 \int_{0}^{t}	\omega_n(s)\dd s,
		\end{align*}
		where $\zeta_3:= 1+\bar{M}_0+\zeta_2$ and $\zeta_4:= S_0X^\alpha$.
		 Therefore, applying Gronwall's inequality, we get
		
		\begin{align}
			\omega_n(t)\le \zeta_3\exp\left(\zeta_4 T\right)\epsilon. \label{1_9}
		\end{align}
		
		Hence, the equicontinuity with respect to $x$ on $\Xi_1$ is obtained.
		
		Thus, from the results \eqref{1_10} and \eqref{1_9}, we can conclude that
		
		\begin{align}
			\sup_{|x'-x| < \delta,~|t'-t|<\delta} \left|f_n(x',t') - f_n(x,t)\right|\le \left[ \zeta_3\exp\left(\zeta_4 T\right) + \zeta_1\right]\epsilon, \label{1_11}
		\end{align}
		
		whenever $ \frac{1}{X}\le x\le x' \le X$ and $0\le t\le t' \le T$. Furthermore, by combining the estimates in \eqref{bdd} and \eqref{1_11} with the Arzel\`a–Ascoli theorem, we conclude that there exists a subsequence \(\{f_{n_s}\}_{s=1}^\infty\) which is relatively compact in the topology of uniform convergence on each rectangle \(\Xi_1\).
	\end{proof}
	
	\subsection{Existence of global solutions:}
	\begin{Theorem}[Existence theorem]\label{th1}
		Let $T>0$, and suppose that the kernels $S(x)$ and $b(x,y)$ satisfy assumption $(H_1)$ and $(H_2)$, respectively. If the initial data $f_0(x)\in X_{0,r}^{+}$ for some $r\ge 1$,, then the IVP \eqref{0_1}–\eqref{0_2} admits at least one mass conserving solution 
		\(
		f \in \mathcal{C}\!\left([0,T]; X_{0,r}^{+}\right).
		\)
	\end{Theorem}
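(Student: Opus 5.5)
We obtain the solution as a limit of the truncated solutions $f_n$ of \eqref{1_1}. By Lemma~\ref{lemaa_2} and a diagonal argument over an exhausting family of rectangles $\Xi_1$ (with $X=2,3,\dots$), we extract a subsequence, still denoted $f_n$, that converges uniformly on every compact subset of $\Xi$ to a continuous nonnegative function $f$. Continuity of $t\mapsto f(x,t)$ for each $x>0$ is inherited from the uniform limit, which gives item (i) of the definition.

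Next we pass the moment bounds to the limit. For every $0<\rho<R$, Propositions~\ref{prop_1} and \ref{prop_2} and the bound on $\mathcal{M}_{0,n}$ give
\[
\int_\rho^R (1+x^r) f_n(x,t)\,\dd x \le \bar M_0+\bar M_r .
\]
Uniform convergence on $[\rho,R]$ transfers this bound to $f$, and letting $\rho\to0$, $R\to\infty$ shows $f(t)\in X_{0,r}^+$ with $\|f(t)\|_{0,r}\le \bar M_0+\bar M_r$. Here we need $f_0\in X_{0,r}$, and in Proposition~\ref{prop_2} we use the moment of order $r$ (not only integer orders), obtained by the same argument with $i=r$. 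Since $\alpha\le r-1$, the bound $S(y)f(y)\le S_0y^\alpha f(y)$ together with $y^{\alpha}\le 1+y^{r-1}$ controls the gain term, which gives item (ii).

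To get item (iii) we pass to the limit in the integrated form of \eqref{1_1} at a fixed $x>0$. The loss term $S_n(x)f_n(x,s)\to S(x)f(x,s)$ converges uniformly in $s$, since $S_n=S$ once $n\ge x$. The gain term is the delicate part. We split $\int_x^\infty b(x,y)S_n(y)f_n(y,s)\,\dd y$ at $y=R$. On $[x,R]$ we have uniform convergence, $b(x,\cdot)\le \bar b/x$ by $(H_2)$, and $S_n=S$ for $n\ge R$, so this piece converges. The tail we bound uniformly in $n$ by
\[
\frac{\bar b S_0}{x}\int_R^\infty y^\alpha f_n\,\dd y\le \frac{\bar b S_0}{x}R^{\alpha-r}\bar M_r ,
\]
which tends to $0$ as $R\to\infty$ provided $\alpha<r$, and this holds since $\alpha\le r-1$. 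Dominated convergence in $s$ over $[0,t]$ then yields the integral identity.

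The main obstacle is mass conservation, namely upgrading the bound $\mathcal M_1(t)\le\bar M_1$ to equality. The plan is to show that mass cannot escape to infinity. Because $\alpha<r$ and also $1<r$ is needed, we use uniform integrability of $x f_n$ at infinity:
\[
\int_R^\infty x f_n\,\dd x\le R^{1-r}\bar M_r ,
\]
which requires $r>1$, or else a de la Vallée-Poussin refinement when $r=1$. Mass cannot concentrate near $0$ either, since $\int_0^\rho x f_n\le \rho\bar M_0$. Hence $\mathcal M_{1,n}(t)=\bar M_1$, together with the convergence on $[\rho,R]$, passes to the limit as $\mathcal M_1(t)=\bar M_1$. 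Alternatively, one can check \eqref{2_15} for $f$ using the $r$-moment bound and apply Proposition~\ref{prop_1} directly. Finally, continuity of $t\mapsto f(t)$ in $X_{0,r}$ follows from the pointwise continuity combined with the same tightness estimates, via dominated convergence using the $\|\cdot\|_{0,r}$ bound. For the top weight $x^r$ itself, we first obtain weak continuity and then use that $\mathcal{M}_r$ is continuous (it is nonincreasing by Proposition~\ref{prop_2}'s argument) to upgrade to strong continuity.
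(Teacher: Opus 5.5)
Your skeleton is the paper's: diagonal extraction via Lemma~\ref{lemaa_2}, transfer of the moment bounds, passage to the limit in the integrated equation, and mass conservation through \eqref{2_15} and Proposition~\ref{prop_1}. Your treatment of the gain-term tail via $\frac{\bar b S_0}{x}R^{\alpha-r}\bar M_r$ is more explicit than the paper's.

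\textbf{Main gap.} The genuine gap is the ingredient you import as ``the bound on $\mathcal{M}_{0,n}$''. The estimate $\mathcal{M}_{0,n}(t)\le\bar M_0$ is false. Taking $\phi\equiv 1$ in \eqref{mom_eq} gives $\frac{d}{dt}\mathcal{M}_{0,n}(t)=\int_0^\infty(\nu(y)-1)S_n(y)f_n(y,t)\,\dd y\ge 0$, because fragmentation creates particles. The paper's own Test Case~I has $\mathcal{M}_0(t)=1+t$. Consequently two of your steps are unproved: $\|f(t)\|_{0,r}\le\bar M_0+\bar M_r$ (i.e.\ $f(t)\in X_0$), and $\int_0^\rho x f_n\,\dd x\le\rho\bar M_0$.

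The second of these can be bypassed. The limit in the equation at fixed $x>0$ needs only $\mathcal{M}_1$ and $\mathcal{M}_r$. So does your alternative mass-conservation route: $\int_0^\infty xSf\,\dd x\le S_0(\mathcal{M}_1+\mathcal{M}_r)$ since $1\le 1+\alpha\le r$, and then the argument of Proposition~\ref{prop_1} applies to $f$.

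But $f(t)\in X_0$ cannot be bypassed, because $(H_2)$ puts no bound on $\nu$. For example, a smoothed $b(x,y)=c(y)/x$ on $[\epsilon(y),y]$, with $c(y)=y/(y-\epsilon(y))$ and $\epsilon(y)\le y/2$, satisfies \eqref{0_4} and $xb\le 2$, while $\nu(y)\ge\ln(y/\epsilon(y))$ can be made arbitrarily large. Inserting $f(y,s)\ge e^{-S(y)s}f_0(y)$ into the gain term then gives $\mathcal{M}_0(t)=\infty$ for all $t>0$ once $\nu$ outgrows $f_0$. You need an extra hypothesis such as $\nu\le\bar\nu$. Under it, Gronwall yields $\mathcal{M}_{0,n}(t)\le(\bar M_0+\bar M_r)e^{(\bar\nu-1)S_0T}$ uniformly in $n$, and this is the bound that should replace $\bar M_0$. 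This defect is inherited from the paper (the estimate following Proposition~\ref{prop_2}, and \eqref{1_13}); you did not introduce it.

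\textbf{Continuity step.} Your final step, continuity of $t\mapsto f(t)$ in $X_{0,r}$, is not justified as written; the paper does not address it either. A nonincreasing $\mathcal{M}_r$ need not be continuous: combined with Fatou you only get right-continuity. Also, a bound on $\|f(t)\|_{0,r}$ is not a dominating function. What is missing is tail control uniform in $t$.

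At infinity, use de la Vall\'ee-Poussin. Pick a nondecreasing $\theta\to\infty$ such that, with $\Phi(x):=x^r\theta(x)$, one has $\int_0^\infty \Phi f_0\,\dd x<\infty$. Since $\Phi(x)/x=x^{r-1}\theta(x)$ is nondecreasing, the argument of Proposition~\ref{prop_2} gives $\int\Phi f_n(t)\le\int\Phi f_0$. Hence $\int_R^\infty x^r f(x,t)\,\dd x\le\theta(R)^{-1}\int\Phi f_0$ for all $t$.

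Near $x=0$, the weight $x^r$ is handled by $\rho^{r-1}\bar M_1$ if $r>1$, and by Scheff\'e's lemma with the conserved mass if $r=1$. The unweighted part, however, needs a majorant such as $f(x,t)\le f_0(x)+\int_0^T\!\int_x^\infty b(x,y)S(y)f(y,s)\,\dd y\,\dd s$. This is integrable near $0$ provided $\int_0^T\!\int_0^\infty \nu S f\,\dd y\,\dd s<\infty$, which is once more a condition on $\nu$.
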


	\begin{proof}
		By applying the diagonal method, one can extract a uniformly convergent subsequence 
		$\{f_{p}\}_{p=1}^{\infty}$ from the sequence $\{f_{n}\}_{n=1}^{\infty}$ on each compact subset of $\Xi$.  
		This subsequence converges to a continuous, nonnegative limit function $f$ that satisfies \eqref{bdd}.  
		To proceed further, consider the integral
		\[
		\int_{\bar{z}_1}^{\bar{z}_2} \left(1+x^j\right)\, f(x,t)\, \dd x,\quad \text{for} \quad 0\le j \le r,
		\]
		where $0<\bar{z}_1<\bar{z}_2$.
		
		Now, the subsequence $\{f_p\}$ ensures that for all $\epsilon>0$ there exists $p\ge 1$ such that, the following relation holds good.
		\begin{align}
			\int_{\bar{z}_1}^{\bar{z}_2}  \left(1+x^j\right) \left|f(x,t)-f_p(x,t)\right| {\dd}x \le  \epsilon. \label{1_12}
		\end{align}
		
		Furthermore, $\epsilon$, $\bar{z}_1$ and $\bar{z}_2$ are arbitrary in \eqref{1_12}. Therefore, for all $0\le j\le r$, we have
		\begin{align}
			\int_{0}^{\infty} \left(1+x^j\right)f(x,t)\, {\dd}x \le \bar{M}_{0} + \bar{M}_{r}. \label{1_13}
		\end{align}
		
		Our aim to show that $f(x,t)$ is a solution to the IVP \eqref{0_1}-\eqref{0_2}. In this regard, replace $S_n$, $f_n$ in \eqref{1_1} with $S_p-S+S$, $f_p-f+f$ respectively, and rearrange the terms to obtain:
		\begin{align}
			\begin{split}
				\left(f_p - f\right)(x,t) + f(x,t) &=  f_0(x)+ \int_0^t \left[ \int_{x}^{\infty} b(x,y) \left(S_p - S\right)(y) f_p(y,s)\dd y\right.\\
				& \left. +\int_{x}^{\infty} b(x,y)S(y) \left(f_p - f\right)(y,s)\dd y + \int_{x}^{\infty} b(x,y)S(y)f(y,s)\dd y \right.\\
				&\left.+ \left(S_p - S\right)(x) f_p(x,s) - S(x) \left(f_p - f\right)(x,s) - S(x)f(x,s)\right]{\dd}s. \label{1_14}
			\end{split}
		\end{align}
		Using previous arguments, we get
		\begin{align*}
			\int_{x}^{\infty} b(x,y) \left(S_p - S\right)(y) f_p(y,s)\dd y \le \zeta_5 \epsilon \quad \int_{x}^{\infty} b(x,y)S(y) \left(f_p - f\right)(y,s)\dd y\le \zeta_6 \epsilon
		\end{align*}
		for some constant $\zeta_5$ and $\zeta_6$. Using the definition of convergence for a sequence of functions, it follows directly that all integrals involving the terms 
		$\left(S_p - S\right)$ and $\left(f_p - f\right)$ vanish as $p \to \infty$.  
		Consequently, letting $p \to \infty$ in the identity \eqref{1_14} yields
		\begin{align}\label{1_16}
			f(x,t) 
			= f_0(x) 
			+ \int_{0}^{t}
			\left[
			\int_{x}^{\infty} b(x,y)\, S(y)\, f(y,s)\, \dd y 
			- S(x)\, f(x,s)
			\right]\dd s,
			\quad x \in \mathbb{R}_+.
		\end{align}
		
		The tail estimates together with the continuity of $f(x,t)$ imply that the right-hand side of \eqref{0_1}, when evaluated at $f$, defines a continuous function on $\Xi$.  
		Moreover, the relation \eqref{1_13} ensures that $f(x,t) \in X_{0,r}^+$, and differentiating \eqref{1_16} with respect to $t$ shows that $f(x,t)$ is a continuous and differentiable solution of the IVP \eqref{0_1}–\eqref{0_2}.
		
		\emph{Mass conservation:} To prove the the obtained solution is mass conserving, it is enough to show that the condition \eqref{2_15} holds good. Observe that
		\begin{align*}
				\int_{0}^{T}\int_{0}^{\infty}xS_n(x)f_n(x,s) \dd x \dd s \le S_0 T \left[\bar{M}_0+\bar{M}_r\right]
		\end{align*}
		Thanks to the Proposition \ref{prop_1}, the mass conservation follows for the obtained solution.
	\end{proof}
	
	\subsection{Continuous Dependence on Initial Data and Uniqueness}
	\begin{Proposition}\label{prop_3}
		Assume that the fragmentation kernels satisfy the assumptions $H_1$ and $H_2$, and let $f$ and $\hat{f}$ be the solutions of \eqref{0_1} corresponding to the initial conditions $f(x,0)=f_0(x)\in X_{0, r}^{+}$ and $\hat{f}(x,0)=\hat{f}_0(x)\in X_{0, r}^{+}$, respectively, satisfying
		\begin{align}\label{ini_1}
			f,\, \hat{f} \in \mathcal{C}\!\left([0,T]; X_{0,r}^{+}\right) \quad \text{for}\quad r\ge 1,
		\end{align}
		for all $T>0$. Then, for each $T>0$, there exists a positive constant $\kappa$, depending on $T$, $\|f_0\|_{0,1}$, and $\|\hat{f}_0\|_{0,1}$, such that
		\[
		\sup_{t \in [0, T]} \|f - \hat{f}\|_{0,1} \le \kappa\, \|f_0 - \hat{f}_0\|_{0,1}.
		\]
	\end{Proposition}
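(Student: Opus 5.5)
Since the equation is linear, I would work directly with the difference $g := f-\hat f$. It solves \eqref{0_1} in the integral sense of the Definition, with initial datum $g_0 := f_0-\hat f_0$. The aim is a differential inequality for the weighted norm
\[
\|g(t)\|_{0,1} = \int_0^\infty (1+x)\,|g(x,t)|\,\dd x .
\]
I would multiply the equation for $g$ by $(1+x)\sgn(g(x,t))$ and integrate in $x$. Justifying the time derivative of $|g|$ is done by the usual regularization of $|\cdot|$, using the Definition's time continuity and the integral identity (iii). This gives
\[
\frac{\dd}{\dd t}\|g\|_{0,1} \le \int_0^\infty \Big[\int_0^y (1+x)\,b(x,y)\,\dd x \;-\;(1+y)\Big] S(y)\,|g(y,t)|\,\dd y .
\]
Here the gain term is bounded via $\sgn(g(x))g(y)\le |g(y)|$, and the loss term is exactly $-(1+y)S(y)|g(y)|$. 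All integrals are first performed on $(0,M)$ with the cutoff $\chi_{(0,M)}$, as in Proposition~\ref{prop_1}.

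Using \eqref{0_3}--\eqref{0_4}, the bracket equals $\nu(y)+y-1-y = \nu(y)-1$. So the mass parts cancel exactly, and only the number part survives. The estimate then reduces to
\[
\frac{\dd}{\dd t}\|g\|_{0,1} \le \int_0^\infty (\nu(y)-1)\,S(y)\,|g(y,t)|\,\dd y .
\]
The key step is to bound $(\nu(y)-1)S(y)$ by $C(1+y)$, after which Gronwall's inequality closes the argument with $\kappa = e^{CT}$.

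\textbf{Main obstacle.} The difficulty is controlling $\nu(y)S(y)$ under only $H_1$ and $H_2$.
\begin{itemize}
\item For small $y$, the bound $xb(x,y)\le \bar b$ alone does not bound $\nu(y)$. I would combine $H_2$ with $S(y)\le S_0y^\alpha$ and use that $\nu(y)-1\ge 0$, splitting the region into $y\le 1$ and $y>1$.
\item For large $y$, $\nu(y)S(y)$ may grow like $y^{\alpha}$ times the number of fragments. Here I would use the a priori moment bounds of Propositions~\ref{prop_1}--\ref{prop_2} for both $f$ and $\hat f$, which are in $X_{0,r}^+$ with $\alpha\le r-1$.
\item Concretely, I would split $\int_0^\infty = \int_0^R + \int_R^\infty$. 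On $[0,R]$, $(\nu-1)S\le C_R(1+y)$ is used under the additional assumption that $\nu$ is bounded on compacts, which is the "additional assumption on the kernels" mentioned in the introduction. On $(R,\infty)$, $|g|\le f+\hat f$, and the tail is controlled by $R^{-(r-1-\alpha)}$ times $\bar M_r+\hat{\bar M}_r$.
\end{itemize}
This last term is not proportional to $\|g_0\|_{0,1}$. Therefore, if the clean Lipschitz estimate is to hold with $\kappa$ depending only on $T$, $\|f_0\|_{0,1}$ and $\|\hat f_0\|_{0,1}$, I expect to need the structural bound $\nu(y)S(y)\le C(1+y)$. This could be taken, for instance, from bounded $\nu$ together with $\alpha\le 1$, which was already used in Lemma~\ref{lemaa_2}. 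I would state this as the working assumption and then apply Gronwall.
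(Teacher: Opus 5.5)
Your argument is the paper's argument. You multiply the equation for $\phi=f-\hat f$ by $(1+x)\sgn(\phi)$, bound the gain term by $|\phi(y)|$, swap the order of integration and use \eqref{0_4}. This leaves
\[
\frac{\dd}{\dd t}\|\phi(t)\|_{0,1}\le\int_0^\infty(\nu(y)-1)\,S(y)\,|\phi(y,t)|\,\dd y .
\]
At this point the paper only asserts that \eqref{0_4} and \eqref{ini_1} give the stability estimate; it never treats this residual term. Your diagnosis of the step is correct, and the paper does not contain a missing idea that closes it.

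There are two reasons the residual is not controlled by $C\|\phi\|_{0,1}$ under $H_1$--$H_2$ alone:
\begin{itemize}
\item $H_2$ only gives $b(x,y)\le\bar b/x$, which is not integrable at $x=0$, so it places no bound on $\nu$. For instance, $b(x,y)=1/x$ on $(0,y)$ satisfies \eqref{0_4} and $H_2$ with $\bar b=1$, yet $\nu\equiv\infty$.
\item Even with $\nu\equiv2$, $H_1$ allows $S(y)=S_0y^\alpha$ with $\alpha>1$ as soon as $r>2$. Then $(\nu-1)S(y)\le C(1+y)$ fails.
\end{itemize}
Invoking \eqref{ini_1} only adds a tail term built from $\mathcal{M}_r$ of $f$ and $\hat f$, which is not proportional to $\|f_0-\hat f_0\|_{0,1}$, as you observed.

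So, as a proof of the proposition as stated, your proposal has a genuine gap at the Gronwall step. It is the same gap the paper's proof leaves open. With your explicit extra hypothesis $(\nu(y)-1)S(y)\le C(1+y)$ (e.g.\ $\sup\nu<\infty$ and $\alpha\le 1$), the argument is complete. Fubini is then justified because $1+\alpha\le r$, and you get $\kappa=e^{CT}$, independent of $\|f_0\|_{0,1}$ and $\|\hat f_0\|_{0,1}$.

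Two smaller remarks. First, your first bullet cannot work for the reason above. Combining $H_2$ with $S(y)\le S_0y^\alpha$ does nothing to make $\nu(y)$ finite near $y=0$, so boundedness of $\nu$ must simply be assumed, as you eventually do.

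Second, for $\alpha>1$ the obstruction lies in the weighted-$L^1$ Gronwall method, not necessarily in the statement. Take $S(x)=x^2$ and $b(x,y)=2/y$, which is the paper's own Test Case II. The solution with monodisperse datum $\delta_l$ is
\[
e^{-tl^2}\delta_l+2tl\,e^{-tx^2}\chi_{(0,l)}(x).
\]
Its mass is conserved and its particle number is at most $1+l\sqrt{\pi t}$. By superposition, nonnegative data satisfy $\mathcal{M}_0(t)\le\mathcal{M}_0(0)+\sqrt{\pi t}\,\mathcal{M}_1(0)$, hence $\|f(t)\|_{0,1}\le(1+\sqrt{\pi T})\|f_0\|_{0,1}$, even though $(\nu-1)S$ grows quadratically. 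Covering such kernels would need a different argument, such as positivity of the solution operator combined with a direct bound on the growth of the zeroth moment. Neither your proposal nor the paper supplies one.
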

	\begin{proof}
		Let us define the function 
		\[
		\phi(x,t) := f(x,t) - \hat{f}(x,t), \qquad \phi_0(x) := f(x,0) - \hat{f}(x,0),
		\]
		and consider the function $\sgn(h)$ given by
		\[
		\sgn(h)=
		\begin{cases}
			\dfrac{h}{|h|}, & \text{if } h \ne 0,\\[1ex]
			0, & \text{if } h = 0,
		\end{cases}
		\qquad\text{and}\qquad 
		\dfrac{d|w(h)|}{dh} = \sgn(w(h)) \dfrac{dw(h)}{dh}.
		\]
		
		Using the definition of $\phi$, equation \eqref{0_1} may be rewritten as
		\[
		\frac{\partial \phi}{\partial t}(x,t)
		= \int_{x}^{\infty} b(x,y) S(y)\, \phi(y,t)\, dy - S(x)\, \phi(x,t).
		\]
		Multiplying this equation by $(1+x) \,\sgn(\phi)$ and integrating with respect to $x$ over $(0,\infty)$, we obtain
		\[
		\|\phi(t)\|_1
		\le \|\phi_0\|_1 
		+ \int_{0}^{t} \int_{0}^{\infty} (1+x)\, \sgn(\phi(x,s)) 
		\left[ 
		\int_{x}^{\infty} b(x,y) S(y)\, \phi(y,s)\, dy 
		- S(x)\phi(x,s)
		\right] dx\, ds.
		\]
		This further implies
		\[
		\|\phi(t)\|_1 
		\le \|\phi_0\|_1
		+ \int_{0}^{t} \left[
		\int_{0}^{\infty}\!\!\int_{x}^{\infty} 
		(1+x)\, b(x,y) S(y)\, |\phi(y,s)|\, dy\, dx
		- \int_{0}^{\infty} (1+x) S(x)\, |\phi(x,s)|\, dx
		\right] ds.
		\]
		By changing the order of integration in the first term and using conditions \eqref{0_4} and \eqref{ini_1}, we obtain the desired stability estimate. 
		\end{proof}
		
		\begin{Corollary}
			Assume that all hypotheses of Theorem~\ref{th1} hold for some $r \ge 1$. If the initial datum satisfies $f_0 \in X_{0,r}^+$, then the IVP \eqref{0_1}–\eqref{0_2} admits a unique mass conserving solution
			\[
			f \in \mathcal{C}\!\left([0,T]; X_{0,r}^{+}\right),
			\qquad \text{for each }\, 0 < T < \infty.
			\]
		\end{Corollary}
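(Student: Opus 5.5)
Existence is already settled by Theorem~\ref{th1}: under $(H_1)$–$(H_2)$ and $f_0\in X_{0,r}^+$, the IVP \eqref{0_1}–\eqref{0_2} has at least one mass conserving solution $f\in\mathcal{C}([0,T];X_{0,r}^+)$. Since $T>0$ is arbitrary in that theorem, this holds for every $0<T<\infty$. So only uniqueness remains, and I will obtain it from the stability estimate of Proposition~\ref{prop_3}.

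Suppose $f$ and $\hat f$ are two mass conserving solutions in $\mathcal{C}([0,T];X_{0,r}^+)$ with the same initial datum $\hat f_0=f_0$. They satisfy \eqref{ini_1}, so Proposition~\ref{prop_3} gives
\[
\sup_{t\in[0,T]}\|f(t)-\hat f(t)\|_{0,1}\le \kappa\,\|f_0-\hat f_0\|_{0,1}=0 .
\]
Hence $f(\cdot,t)=\hat f(\cdot,t)$ a.e. for every $t$. Both functions are continuous on $\Xi$ (Theorem~\ref{th1} produces continuous limits), so they agree pointwise. Together with the existence part, this proves the corollary for each finite $T$.

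The main obstacle is checking that Proposition~\ref{prop_3} really applies, since its proof was only sketched. The key step there is the inequality
\[
\int_0^\infty\!\!\int_x^\infty (1+x)\,b(x,y)S(y)|\phi(y)|\,dy\,dx-\int_0^\infty(1+x)S(x)|\phi(x)|\,dx\le \int_0^\infty(\nu(y)-1)S(y)|\phi(y)|\,dy,
\]
obtained by Fubini together with \eqref{0_3}–\eqref{0_4}. The mass part cancels exactly. To close with Gronwall, however, the number part needs $(\nu(y)-1)S(y)\le C(1+y)$.

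The exchange of integrals is legitimate because $\int_0^\infty yS(y)|\phi|\,dy\le S_0\int_0^\infty (y+y^{1+\alpha})|\phi|\,dy<\infty$, using $1+\alpha\le r$ and $f,\hat f\in X_{0,r}$. The requirement on $\nu S$ is the ``additional assumption on the fragmentation kernels'' mentioned in the introduction, and I would make it explicit: $\nu$ bounded and $\alpha\le1$, or more generally $\nu(y)S(y)\le C(1+y)$. With this, Gronwall yields $\kappa=e^{Ct}$ and the argument above goes through.

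If one prefers to avoid that assumption, an alternative is to run the same computation with weight $x$ alone, which gives $\|\phi(t)\|_1\le\|\phi_0\|_1$ with no growth condition at all. Uniqueness then follows from $\int_0^\infty x\,|f-\hat f|\,dx=0$ directly.
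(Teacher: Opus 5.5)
Your first route is exactly the paper's proof: existence from Theorem~\ref{th1}, and uniqueness from Proposition~\ref{prop_3} with $\hat f_0=f_0$. Your audit of that proposition is correct, and it matters. After Fubini and \eqref{0_3}--\eqref{0_4}, the $(1+x)$-weighted computation leaves the number term $\int_0^\infty(\nu(y)-1)S(y)|\phi(y,s)|\,dy$. Hypotheses $(H_1)$--$(H_2)$ do not bound this by $C\|\phi(s)\|_{0,1}$: $\alpha$ may exceed $1$ when $r>2$, and $xb(x,y)\le\bar b$ gives no bound on $\nu$. So the paper's one-line proof relies on an unstated growth condition. Do not repair this by adding $\nu S\le C(1+y)$ as a hypothesis, since that proves less than the corollary claims.

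Your weight-$x$ argument is a genuinely better route and should be the main proof. With $x\,\sgn(\phi)$ as multiplier, gain and loss cancel exactly by \eqref{0_4}, so $\|f(t)-\hat f(t)\|_1\le\|f_0-\hat f_0\|_1$ with no Gronwall step. Two things need checking:
\begin{itemize}
\item $t\mapsto\phi(x,t)$ is absolutely continuous for a.e.\ $x$. This follows from conditions (i)--(iii) of the solution definition, and it lets you differentiate $|\phi|$.
\item Tonelli applies. This follows from $yS(y)\le S_0y^{1+\alpha}\le S_0(1+y^r)$ together with $\sup_{t\in[0,T]}\bigl(\|f(t)\|_{0,r}+\|\hat f(t)\|_{0,r}\bigr)<\infty$, which holds by continuity into $X_{0,r}$.
\end{itemize}
Compared with the paper, this uses neither $(H_2)$ nor mass conservation of the solutions, so it gives uniqueness in the whole class $\mathcal{C}([0,T];X_{0,r}^+)$. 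What it gives up is a Lipschitz estimate in the stronger $\|\cdot\|_{0,1}$ norm, which uniqueness does not need.

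A minor point: you cannot invoke Theorem~\ref{th1} for continuity of an arbitrary second solution $\hat f$. Equality for a.e.\ $x$ is the right conclusion (and the paper defines $X_r$ to consist of continuous functions anyway).
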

		
		\begin{proof}
			Uniqueness follows immediately from Theorem~\ref{th1} together with Proposition~\ref{prop_3}.
		\end{proof}

	\section{Derivation of adjoint equation and gradient descent method}\label{S_3} 
	

In this section we will study the optimal control problem to solve the reversibility of the multiple fragmentation. We first define the admissible control as follows
\[
\mathcal{U}_{\mathrm{ad}}
:=\Big\{ f_0 \in X_{0,r}^+ \cap L^2(\mathbb{R}_+) \;:\;
0 \le f_0(x) \le M \ \text{a.e. in } \mathbb{R}_+ \Big\},
\]
for some constant $M>0$. For some specific target number density function $f^\ast$, we consider the cost functional $\ds J: \mathcal{U}_{\mathrm{ad}} \to \mathbb{R}$, defined as
\begin{align}\label{Cost_Fun}
	J(f_0)
	:=\frac12 \int_{\mathbb{R}_+}
	\big(f(x,T;f_0)-f^\ast(x)\big)^2\,dx,
\end{align}
	subject to the constraint
\begin{align}
	& \partial_t f (x,t) = \mathcal{A}\left(f\right)(x,t), \quad \text{where}\quad \left(x,t\right) \in \mathbb{R}_+ \times \left[0,T\right],\\
	& f(x, 0)  =  f_0 (x)\ge 0 \quad \text{where}\quad x \in \mathbb{R}_+.
\end{align}

	\begin{Theorem}[Existence of a minimizer] 
		Assume that the fragmentation kernel \(S\)and the distribution function $b$ satisfies hypotheses {\rm($H_1$)} and {\rm($H_2$)}, respectively. Then the optimal control problem
		\[
		\min_{f_0 \in \mathcal{U}_{\mathrm{ad}}} J(f_0)
		\]
		admits at least one minimizer $\bar f_0 \in \mathcal{U}_{\mathrm{ad}}$.
	\end{Theorem}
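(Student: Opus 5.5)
We use the direct method of the calculus of variations. Since $J\ge 0$, set $j:=\inf_{\mathcal U_{\mathrm{ad}}}J<\infty$, which is finite because $f_0\equiv 0$ is admissible. Take a minimizing sequence $\{f_0^k\}\subset\mathcal U_{\mathrm{ad}}$ with $J(f_0^k)\to j$.

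\textbf{Step 1: weak compactness of the controls.} The set $\mathcal U_{\mathrm{ad}}$ is bounded in $L^2(\mathbb R_+)$ only if $\|f_0^k\|_{0,r}$ is also controlled. Boundedness $0\le f_0\le M$ alone does not give a uniform $L^2$ or $L^1$ bound on $\mathbb R_+$. I would therefore restrict to a minimizing sequence with $\|f_0^k\|_{0,r}\le K$; equivalently, I would read $\mathcal U_{\mathrm{ad}}$ as including a bound $\|f_0\|_{0,r}\le K$, which I would flag as an implicit assumption. With this bound, $\|f_0^k\|_{L^2}^2\le M\|f_0^k\|_{0}\le MK$. We then extract a subsequence with $f_0^k\rightharpoonup\bar f_0$ weakly in $L^2(\mathbb R_+)$ and weakly in $L^1((1+x^r)\,dx)$; the latter follows by Dunford–Pettis, since $0\le f_0^k\le M$ gives uniform integrability on bounded sets and the bound on the $r$-th moment with $r\ge1$ gives tightness at infinity. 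The constraints $0\le f_0\le M$ and $\|f_0\|_{0,r}\le K$ are convex and closed, hence weakly closed, so $\bar f_0\in\mathcal U_{\mathrm{ad}}$.

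\textbf{Step 2: passage to the limit in the state.} Theorem~\ref{th1} and its corollary give unique mass-conserving solutions $f^k=f(\cdot,\cdot;f_0^k)$ and $\bar f=f(\cdot,\cdot;\bar f_0)$. The solution map is linear, since $\mathcal A$ is linear and solutions are unique. By Proposition~\ref{prop_3} it is a bounded linear operator $E_T:X_{0,1}\to X_{0,1}$, $f_0\mapsto f(T)$, on differences of admissible data. A bounded linear operator is weak–weak continuous, so $f^k(T)\rightharpoonup \bar f(T)$ weakly in $X_{0,1}$.

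To handle the $L^2$ cost, I would also need $f^k(T)$ bounded in $L^2$, and this is the main obstacle. The plan is to derive an $L^\infty$ (or $L^2$) bound on $f(T)$ from equation~\eqref{0_1}. Under $H_2$ the gain term is bounded by $\bar b\,x^{-1}\int S f\,dy$, so the pointwise bound near $x=0$ degenerates. For the $L^2$ bound, an energy estimate gives
\[
\frac{d}{dt}\tfrac12\|f\|_{L^2}^2\le \int_0^\infty f(x)\int_x^\infty b(x,y)S(y)f(y)\,dy\,dx .
\]
This would have to be controlled using $H_2$, the moment bounds of Propositions~\ref{prop_1}--\ref{prop_2}, and the loss term. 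If this fails, I would fall back on showing that $J(f_0)=\frac12\|E_Tf_0-f^\ast\|_{L^2}^2$ is convex and lower semicontinuous with respect to strong convergence on $\mathcal U_{\mathrm{ad}}$. Here lower semicontinuity follows from Proposition~\ref{prop_3} together with Fatou's lemma along a.e.-convergent subsequences. Convexity holds because $E_T$ is affine and the squared norm is convex. Mazur's lemma then upgrades strong lower semicontinuity to weak lower semicontinuity.

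\textbf{Step 3: conclusion.} Since $E_T$ is linear, $f^k(T)-f^\ast\rightharpoonup \bar f(T)-f^\ast$ weakly in $L^2$, using the $L^2$ bound from Step 2 together with weak $L^1$ convergence to identify the limit. Weak lower semicontinuity of the $L^2$ norm then gives
\[
J(\bar f_0)\le\liminf_{k\to\infty}J(f_0^k)=j,
\]
so $\bar f_0$ is a minimizer.
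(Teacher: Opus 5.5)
The extra bound you flag is not a technicality. It is exactly where the paper's proof breaks.

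\textbf{Where the paper's proof fails.} The paper makes three claims that do not hold:
\begin{itemize}
\item It asserts that the minimizing sequence is bounded in $L^2(\mathbb{R}_+)$ ``by definition of $\mathcal{U}_{\mathrm{ad}}$''. But $\mathcal{U}_{\mathrm{ad}}$ contains functions equal to $M$ on $[0,n]$ for every $n$.
\item It asserts that $\mathcal{U}_{\mathrm{ad}}$ is closed in $L^2$. But compactly supported cut-offs of $\min\{M,(1+x)^{-1}\}$ converge in $L^2$ to a non-integrable function.
\item It asserts $f^n(\cdot,T)\to\bar f(\cdot,T)$ strongly in $L^2$ by Lemma~\ref{lemaa_2} and Proposition~\ref{prop_3}. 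The former concerns the cut-off kernels $S_n$ with a single initial datum. The latter needs strong convergence of the data in $X_{0,1}$.
\end{itemize}

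\textbf{The statement as written is false.} Take the paper's Test Case~I kernel $S(x)=x$, $b(x,y)=2/y$. It satisfies $H_1$ with $\alpha=1$ (so $r\ge2$) and $H_2$ with $\bar b=2$, and
\[
f(x,T)=e^{-Tx}\Big[f_0(x)+\int_x^\infty\big(2T+T^2(y-x)\big)f_0(y)\,\dd y\Big].
\]
Let $f^\ast(x)=T^2e^{-Tx}$ and $f_0^k=k^{-1}\theta(\cdot-k)$. Here $\theta\ge0$ is continuous, supported in $[0,1]$, with $\int\theta=1$, and $k\ge\|\theta\|_\infty/M$. On $(0,k)$ one gets
\[
f^k(x,T)-f^\ast(x)=k^{-1}e^{-Tx}\big(2T+T^2m-T^2x\big),\qquad m=\int_0^1 s\,\theta(s)\,\dd s,
\]
while $[k,\infty)$ contributes $O(e^{-2kT})$. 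Hence $J(f_0^k)\to0$, even though $f_0^k\to0$ strongly in $L^2$ and $J(0)>0$. On the other hand, $J(f_0)=0$ would force $G(x):=\int_x^\infty(y-x)f_0(y)\,\dd y$ to solve $G''-2TG'+T^2G=T^2$. That gives $G=1+(A+Bx)e^{Tx}$, which contradicts $G(x)\to0$ as $x\to\infty$. So the infimum is not attained.

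\textbf{Comparison and two corrections.} With the constraint added, your route differs from the paper's at the key step. Instead of a strong-convergence claim for the terminal states, you use weak--weak continuity of the bounded linear map $E_T$ plus weak lower semicontinuity of the norm. That is the mechanism that actually works. Two corrections are needed.

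First, a bound on $\|f_0\|_{0,r}$ gives tightness, hence Dunford--Pettis compactness, only in $L^1((1+x^s)\,\dd x)$ for $s<r$, not for $s=r$. You therefore get the weak $X_{0,1}$-convergence that Proposition~\ref{prop_3} needs only when $r>1$. The example shows this matters: there $\|f_0^k\|_{0,1}$ stays bounded. So your constraint must control a moment of order strictly greater than one.

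Second, the ``main obstacle'' in Step~2 is not one. Along a minimizing sequence, $\|f^k(T)\|_{L^2}\le\|f^\ast\|_{L^2}+\sqrt{2J(f_0^k)}$ is bounded for free. Step~3 then closes directly: identify the weak $L^2$ limit with $\bar f(T)$ by testing against $\chi_A$ with $A$ bounded. No energy estimate is needed. None would follow from $H_2$ alone anyway, since it only bounds the gain term by $\bar b\,x^{-1}\int_x^\infty Sf\,\dd y$. Your Mazur fallback is valid but redundant.
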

	\begin{proof}
		From the definition \eqref{Cost_Fun}, $J(f_0)\ge 0$ for all $f_0\in\mathcal{U}_{\mathrm{ad}}$. So, there exists a minimizing sequence
		$\{f_0^n\}\subset\mathcal{U}_{\mathrm{ad}}$ such that
		\[
		J(f_0^n)\to \inf_{f_0\in\mathcal{U}_{\mathrm{ad}}} J(f_0).
		\]
		
		By definition of $\mathcal{U}_{\mathrm{ad}}$, the sequence $\{f_0^n\}$ is bounded in
		$L^2(\mathbb{R}_+)$. Thanks to the reflexivity of $L^2(\mathbb{R}_+)$, there exist a subsequence
		(relabeling as $\{f_0^n\}$) and a function
		$\bar f_0 \in L^2(\mathbb{R}_+)$ such that
		\[
		f_0^n \rightharpoonup \bar f_0
		\quad \text{weakly in } L^2(\mathbb{R}_+).
		\]
		
		The admissible set $\mathcal{U}_{\mathrm{ad}}$ is convex and closed in $L^2(\mathbb{R}_+)$;
		hence it is weakly closed. Therefore, $\bar f_0\in\mathcal{U}_{\mathrm{ad}}$.
		
		Let $f^n(\cdot,t)$ and $\bar f(\cdot,t)$ denote the solutions of the fragmentation equation
		corresponding to the initial data $f_0^n$ and $\bar f_0$, respectively. Then from the compactness result in Lemma \ref{lemaa_2} and the continuous dependence on initial data established in Proposition~ \ref{prop_3}, we have
		\[
		f^n(\cdot,T)\to \bar f(\cdot,T)
		\quad \text{strongly in } L^2(\mathbb{R}_+).
		\]
		
		Since the $L^2$-norm is weakly lower semicontinuous, it follows that
		\[
		J(\bar f_0)
		=\frac12\|\bar f(\cdot,T)-f^\ast\|_{L^2(\mathbb{R}_+)}^2
		\le \liminf_{n\to\infty}
		\frac12\|f^n(\cdot,T)-f^\ast\|_{L^2(\mathbb{R}_+)}^2
		= \inf J.
		\]
		
		Hence, $\bar f_0$ is a minimizer of $J$ over $\mathcal{U}_{\mathrm{ad}}$.
	\end{proof}

	\subsection{Derivation of the adjoint equation}
	To derive the adjoint equation of the multiple fragmentation \eqref{0_1} first we define the dual space of the $X_{0,r}$ fro $r\ge 1$
	\[
	X_{\infty,r} := X_{0,r}^\ast = \big\{\varphi:\left(0,\infty\right) \to \mathbb{R} \text{ measurable }: \|\varphi\|_{\infty,r} = \sup_{x>0} \frac{\left|\varphi (x)\right|}{1+x^r} \big \}
	\]
	equipped with the duality pairing
	\[
	\langle f,g \rangle = \int_{\mathbb{R}_+} f(x) g(x) \dd x, \quad f \in X_{0,r},~\text{and}~ g\in  X_{\infty,r}.
	\]
	Since $L^\infty(0,\infty) \subseteq X_{\infty,r}$  and the fragmentation operator $\ds \mathcal{A}$ is weakly dense in $X_{0,r}$, so it is natural to consider $L^\infty(0,\infty)$ as the domain of the adjoint operator.
	\begin{Proposition}
		Let $f \in X_{0,r}$ for some $r\ge 1$, the adjoint fragmentation operator $\ds \mathcal{A}^\ast \varphi(x,t) : \text{dom}(\mathcal{A}^\ast ) := L^\infty(0,\infty) \subseteq X_{\infty,r} \to X_{\infty,r}$ is given by
		\[
		\mathcal{A}^\ast \varphi(x,t)= -\int_{0}^{x} b (y,x) S(x) \varphi(y,t)\dd y + S(x) \varphi(x,t)
		\]
	\end{Proposition}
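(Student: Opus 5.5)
The plan is to verify the defining duality identity directly. For every $f$ in a suitable dense subset of $X_{0,r}$ (say continuous, compactly supported in $(0,\infty)$) and every $\varphi\in L^\infty(0,\infty)$, I will show
\[
\langle \mathcal{A} f, \varphi\rangle = \langle f, \widetilde{\mathcal{A}}\varphi\rangle,
\qquad
\widetilde{\mathcal{A}}\varphi(x) := \int_{0}^{x} b(y,x) S(x)\varphi(y)\,\dd y - S(x)\varphi(x).
\]
This identifies the adjoint as $\widetilde{\mathcal{A}}$. The operator $\mathcal{A}^\ast$ in the statement is $-\widetilde{\mathcal{A}}$, which is the sign convention suited to the backward-in-time adjoint equation \eqref{adj}. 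The sign comes from the Lagrangian derivation, in which $\partial_t$ is integrated by parts in time. I will note this explicitly, so that the statement reads as the operator appearing in $\partial_t\varphi = \mathcal{A}^\ast\varphi$, i.e.\ $-\partial_t\varphi = \widetilde{\mathcal{A}}\varphi$.

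The core computation proceeds in three steps. First, write
\[
\langle \mathcal{A} f,\varphi\rangle = \int_0^\infty \varphi(x)\int_x^\infty b(x,y)S(y)f(y)\,\dd y\,\dd x - \int_0^\infty S(x)f(x)\varphi(x)\,\dd x.
\]
Second, apply Fubini--Tonelli to the first term over the region $\{0<x<y<\infty\}$, which gives $\int_0^\infty S(y)f(y)\int_0^y b(x,y)\varphi(x)\,\dd x\,\dd y$. Third, rename the variables $x\leftrightarrow y$; the result is exactly $\langle f,\widetilde{\mathcal{A}}\varphi\rangle$. The loss term is already in paired form, so nothing needs to be done there.

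The main obstacle is justifying Fubini, i.e.\ absolute integrability of $|\varphi(x)|\,b(x,y)S(y)|f(y)|$ on the region. My first bound would use only $\|\varphi\|_\infty$, \eqref{0_3} and $(H_1)$:
\[
\int_0^\infty S(y)|f(y)|\,\nu(y)\,\dd y.
\]
This requires control of $\nu$, which is not assumed, and $(H_2)$ only gives $b\le \bar b/x$, which is not integrable at $x=0$. To avoid this I would restrict to $\varphi\in L^\infty$ with $|\varphi(x)|\le Cx$ near $0$, or more simply to test functions with $|\varphi(x)|\le C(1+x)$ times an $L^\infty$ bound. Then \eqref{0_4} yields the bound
\[
\int_0^y b(x,y)|\varphi(x)|\,\dd x \le C\,y,
\]
and the whole integrand is dominated by $C\,yS(y)|f(y)| \le CS_0 y^{1+\alpha}|f(y)|$. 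This is finite whenever $f\in X_{0,r}$ with $1+\alpha\le r$, which is consistent with $(H_1)$ up to the boundary case; for that case I would take $f$ compactly supported. The same estimate shows that $\widetilde{\mathcal{A}}\varphi$ grows at most like $1+x^{1+\alpha}$, so it belongs to $X_{\infty,r}$. Finally, I would extend the identity from the dense subset to all of $\mathrm{dom}(\mathcal{A})$ by density, using the boundedness just established.
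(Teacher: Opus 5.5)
Your core computation is the paper's: swap the order of integration over $\{0<x<y\}$ and relabel, with the loss term already paired. The paper does this inside a space--time integral, where integrating $\varphi\,\partial_t f$ by parts in $t$ supplies the sign. So your observation that the operator in the statement is $-\widetilde{\mathcal{A}}$, the generator of the backward equation $-\partial_t\varphi=\widetilde{\mathcal{A}}\varphi$, is exactly right. The paper gives no integrability justification at all. Your attempt to supply one is where the two proofs differ, and it is also where your argument stops proving the stated result.

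\textbf{The domain restriction.} Restricting to $\varphi$ with $|\varphi(x)|\le Cx$ near $0$ establishes the identity only on a strictly smaller domain than the claimed $\mathrm{dom}(\mathcal{A}^\ast)=L^\infty(0,\infty)$. That smaller domain excludes terminal data such as $\varphi(T)=f(T)-f^\ast$, which in general does not vanish at $x=0$.

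\textbf{The obstruction is misplaced.} Condition \eqref{0_3} already presupposes $\nu(y)<\infty$. Take $f$ in your dense class (compact support in $(0,\infty)$) and assume $\nu$ is locally bounded. Then your first bound $\|\varphi\|_\infty\int_0^\infty S(y)|f(y)|\nu(y)\,\dd y$ is finite for every $\varphi\in L^\infty$, with no restriction on $\varphi$.

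\textbf{What is actually missing.} The real gap is control of $\nu$ at infinity. For $\varphi\equiv1$ one gets $\widetilde{\mathcal{A}}\varphi=S(\nu-1)$. Since $\nu\ge2$, this lies in $X_{\infty,r}$ only if $S(x)\nu(x)\lesssim 1+x^r$. The same bound is what you would need to pass from the dense class to all $f\in X_{0,r}$. Nothing in $(H_1)$, $(H_2)$, \eqref{0_3}, \eqref{0_4} provides it: one can keep $xb\le\bar b$ and \eqref{0_4} while letting $\nu(y)$ grow like any power of $y$.

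\textbf{The repair.} The statement needs an extra hypothesis, for instance $\nu\in L^\infty$ (true for the paper's $b=2/y$, where $\nu\equiv2$) or $S\nu\le C(1+x^r)$. Under either one, Tonelli with your first bound works for all $\varphi\in L^\infty$ and all $f\in X_{0,r}$ at once. You then need no restriction on $\varphi$ and no density step.

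\textbf{Two smaller points.} Your alternative $|\varphi|\le C(1+x)$ does not remove $\nu$, since $\int_0^y b(x,y)(1+x)\,\dd x=\nu(y)+y$. The case $1+\alpha=r$ needs no separate treatment, because $y^{1+\alpha}\le 1+y^r$.
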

	\begin{proof}
	In order to compute the adjoint equation, we multiply the equation \eqref{0_1} by $\varphi = \varphi (x,t)$ and integrate over $\ds \mathbb{R}_+ \times \left[0,T\right]$.
	\begin{align}
		\int_{0}^{T} \int_{0}^\infty \varphi \frac{\partial f}{\partial t} \dd x \dd t  = \int_{0}^{T}\int_{0}^{\infty} \varphi (x) \int_{x}^{\infty} b\left(x,y\right)S(y)f(y) \dd y \dd x \dd t- \int_{0}^{T} \int_{0}^{\infty} \varphi(x) S(x) f(x) \dd x \dd t \notag
	\end{align}
	Now integrating by parts in the left hand side and change the order of the integration in the right hand side, we get
	\begin{align*}\allowdisplaybreaks
		\int_{0}^{\infty} \left[\varphi f\right]_0^T \dd x - \int_{0}^{T} \int_{0}^{\infty} f \frac{\partial \varphi}{\partial t}\dd x \dd t
		 =&\int_{0}^{T} \int_{0}^{\infty} \int_{0}^{y} b \left(x,y\right)S(y)\varphi(x)f(y) \dd x \dd y \dd t\\
		 &- \int_{0}^{T} \int_{0}^{\infty}  S(x) \varphi(x)f(x) \dd x \dd t. \notag 
	\end{align*}
	Now by taking Gateaux derivatives with respect to the variable $f$, the above identity reduced to;
	\begin{align}\label{1_2}
		\int_{0}^{T} \int_{0}^{\infty}\delta f \left(- \frac{\partial \varphi}{\partial t} -  \int_{0}^{x} b \left(y,x\right)S(x)\varphi(y) \dd y + S(x) \varphi(x)\right)\dd x \dd t + \int_{0}^{\infty} \left[\varphi \delta f\right]_0^T \dd x=0
	\end{align}
	Now by selecting 
	\begin{align}\label{1_3}
		- \frac{\partial \varphi}{\partial t} -  \int_{0}^{x} b \left(y,x\right)S(x)\varphi(y) \dd y + S(x) \varphi(x)=0,
	\end{align}
	equation \eqref{1_2} becomes
	\begin{align*}
		\int_{0}^{\infty} \left[\varphi (T) \delta f(T) - \varphi(0) \delta f(0)\right] \dd x = 0.
	\end{align*}
	Now assume that $\ds \varphi (T) = f(T) - f^\ast$
	\begin{align}\label{1_4}
		\int_{0}^{\infty} \left(f(T) - f^\ast\right) \delta f \dd x = \int_{0}^{\infty} \varphi(0) \delta f(0) \dd x.
	\end{align}
	This derivation leads us to the adjoint equation
	\begin{align}\label{1_5}
		 \frac{\partial \varphi(x,t)}{\partial t} = \mathcal{A}^\ast \varphi(x,t), \quad\text{with}\quad  \varphi (T) = f(T) - f^\ast.
	\end{align}
	where the adjoint fragmentation operator is given by
	\begin{align*}
	\mathcal{A}^\ast\varphi(x,t) = -\int_{0}^{x} b (y,x) S(x) \varphi(y,t)\dd y + S(x) \varphi(x,t).	
	\end{align*}
	\end{proof}
	
	This equation quantifies the sensitivity of the solution to perturbations in the initial condition. It is important to note that the adjoint equation is solved backward in time, from \( t = T \) to \( t = 0 \). Moreover, system~\eqref{1_5} is well posed precisely when the original forward problem is well-posed.

\subsection{Gradient descent method}
	In order to calculate the gradient $\ds \frac{\delta J}{\delta f_0}$, we take the first variation on the cost functional with respect to $f_0$ in \eqref{0_2} 
	\begin{align*}
		\delta J (f_0) = \int_{0}^{\infty} \delta f (T) \left(f(T) - f^\ast\right)\dd x.
	\end{align*}
	Using the relation \eqref{1_4} on the above expression
	\begin{align}\label{2_1}
		\delta J(f_0) = \int_{0}^{\infty} \varphi(0) \delta f(0) \dd x.
	\end{align}
	To minimize the cost functional $J$, we will apply the classical gradient descent algorithm with some initialization of $f_0$. The iteration can be written in the following form
	\begin{align}\label{2_2}
		 f_0^{k+1} = f_0^k - \varepsilon_0 \nabla J^k\quad\text{for}\quad k
		 \ge 0,
	\end{align}
	and the gradient $\ds  \nabla J^k = \frac{\delta J \left(f_0^k\right)}{\delta f_0}$. Then from the relation \eqref{2_1}, the iteration scheme reduced into the following form:
	\begin{align}\label{2_3}
		f_0^{k+1} = f_0^k - \varepsilon_0 \varphi^k(0)\quad\text{for}\quad k
		\ge 0.
	\end{align}
	Here, more sophisticated or optimal strategies for selecting \( \varepsilon_0 \) are possible, However, for simplicity, we choose the step size \( \varepsilon_0 \) to be constant and independent of \( k \). 
	
	\section{Numerical approach to the inverse design problem}\label{S_4}
	To design the numerical scheme for the both state and adjoint equation, we need to truncate the computational domain as $\ds \Omega:= \left(0, \mathcal{R}\right]$. The corresponding truncated state equation 
	\begin{align}\label{3_0}
		\frac{\partial f(x,t)}{\partial t}:=\int_{x}^{\mathcal{R}} b \left(x,y\right)S(y)f(y,t) \dd y - S(x) f(x,t),
	\end{align}
	We discretize the computational domain \( \Omega \) into \( I \) subintervals 
	\( \Lambda_i := [x_{i-1/2}, x_{i+1/2}] \) for \( i = 1,2,\cdots,I \). 
	The representative point \( x_i \) of each cell \( \Lambda_i \) is defined by $x_i := \frac{x_{i-1/2} + x_{i+1/2}}{2}$,
	with \( x_{-1/2} = 0 \) and \( x_{I+1/2} = \mathcal{R} \). 
	Furthermore, the step size of each subinterval \( \Lambda_i \) is given by $\Delta x_i := x_{i+1/2} - x_{i-1/2} \le \Delta x$,
	for each \( i \in \{1,2,\cdots,I\} \).
	\par Moreover, for a fully discrete formulation, the time domain must also be discretized. 
	To this end, we partition the interval \( [0,T] \) into \( N \) uniform subintervals 
	\( \tau_n := [t_n, t_{n+1}) \) for \( n = 0,1,\cdots,N-1 \), 
	where \( t_n = n \Delta t \) and \( N \Delta t = T \).
	\subsection{Finite volume scheme for forward fragmentation problem (FVS)}
	For the finite volume scheme we need to consider the average value of the solution $f(x,t)$ in each of the subinterval $\ds \Lambda_i:= \left[x_{i-1/2}, x_{i+1/2}\right]$ as follows:
	\begin{align*}
		f_i^n = \frac{1}{\Delta x_i} \int_{\Lambda_i} f(x,t^n) \dd x
	\end{align*}
	To obtain the numerical scheme we integrate equation \eqref{3_0} over $\ds \Lambda_i \times [t_n, t_{n+1}]$ and get the following scheme:
	\begin{align}\label{3_1}
		f_i^{n+1} = f_i^n + \frac{\Delta t}{\Delta x_i} \left(\B_i^n -\D_i^n\right).
	\end{align}
	With the initial condition
	\begin{align}\label{3_2}
		f_i(0) = f_{0,i}\quad i=1,2,\cdot\cdot\cdot.
	\end{align}
	Where birth flux $\B_i^n$ can be computed as follows:
	\begin{align*}
		\B_i^n &= \frac{1}{\Delta x_i}\int_{x_{i-1/2}}^{x_{i+1/2}}\int_{x}^{x_{I+1/2}} b \left(x,y\right)S(y)f(y,t^n) \dd y \dd x \\
		& = \frac{1}{\Delta x_i} \left[\int_{x_{i-1/2}}^{x_{i+1/2}}\int_{x_{i-1/2}}^{y} b \left(x,y\right)S(y)f(y,t^n) \dd x\dd y + \sum_{j=i+1}^{I}\int_{x_{j-1/2}}^{x_{j+1/2}}\int_{x_{i-1/2}}^{x_{i+1/2}}b \left(x,y\right)S(y)f(y,t^n) \dd x\dd y \right] \\
		& = \frac{1}{\Delta x_i} \sum_{j=i}^{I} S_j f_j^n \Delta x_j  \int_{x_{i-1/2}}^{p_j^i} b(x,x_j)\dd x.
	\end{align*}
	Where
	\begin{align*}
		p^i_j = \left\{\begin{array}{ll}
			x_{i+1/2}, &\mbox{for}\quad j\ne i, \vspace{0.2cm}\\
			x_i, &\mbox{otherwise}.
		\end{array}\right.
	\end{align*}
	And the death flux $\D_i^n$ is given by
	\begin{align*}
		\D_i^n = S_i f_i^n.
	\end{align*}
	Using a derivation similar to that in \cite{kumar2015development}, one can show that the above numerical scheme is convergent and achieves second–order accuracy. However, from our computations we observe that although the above scheme is capable of 
	predicting the total number of particles, it fails to conserve the total mass over any given time interval. To address this issue, the following weighted finite volume scheme was proposed in \cite{kumar2015development}.
	
	\subsection{Weighted finite volume scheme for forward equation (WFVS)}
	Mass conservation and total number or particles prediction is the most essential property of any numerical schemes for a particulate process. In this regards, we also calculate the forward fragmentation equation through the following weighted finite volume scheme \cite{kumar2015development}
	  \begin{align}
	  	f_i^{n+1} = f_i^n + \frac{\Delta t}{\Delta x_i} \left(\tilde{\B}_i^n -\tilde{\D}_i^n\right).
	  \end{align}
	Where
	\begin{align*}
		\tilde{\B}_i^n := \frac{1}{\Delta x_i} \sum_{j=i}^{I} w^b_j S_j f_j^n \Delta x_j \int_{x_{i-1/2}}^{p_j^i} b(x,x_j)\dd x \quad \text{and}\quad \tilde{\D}_i^n := w_i^d S_i f_i^n.
	\end{align*}
	The wight functions $\ds w^b$ and $w^d$ can be defined as
	\begin{align*}
		w^b_j = \frac{x_j\left[\nu\left(x_j\right) - 1\right]}{\sum_{i=1}^{j-1} \left(x_j - x_i\right) \B_{i,j}} \qquad\text{and}\quad 	w^d_i = \frac{w^b_i}{x_i} \sum_{j=1}^{i} x_j \B_{i,j}, \qquad j=2,3,\cdot\cdot\cdot ,I.
	\end{align*}
	The values of $w^b_1$ and $w_1^d$ are considered as zero. Similar to the FVS, the WFVS is also second–order accurate.
	\subsection{New developement of the adjoint numerical scheme}
	To construct a numerical scheme for the adjoint equation, we adopt the \emph{continuous adjoint} approach. Specifically, the adjoint equation \eqref{1_5} is first derived at the continuous level and then discretized using the same finite volume framework as that employed for the forward problem. Since the adjoint equation is solved backward in time, we integrate \eqref{1_5} over the space--time control volume 
	\( \Lambda_i \times [t_{n+1}, t_n] \), which leads to the following discrete adjoint scheme:
	\begin{align}\label{4_1}
		\varphi_i^n = \varphi_i^{n+1} +  \frac{\Delta t}{\Delta x_i} \left(\hat{\B}_i^{n+1} -\hat{\D}_i^{n+1}\right).
	\end{align}
	with the initial condition
	\begin{align}\label{4_2}
		\varphi_i (T) = f_i(T) - f^\ast_i, \quad i=1,2,\cdot\cdot\cdot, I.
	\end{align}
	Where adjoint birth flux $\hat{\B}_i^{n+1}$ can be computed as follows:
	\begin{align*}
		\hat{\B}_i^{n+1} &= \frac{1}{\Delta x_i} \int_{x_{i-1/2}}^{x_{i+1/2}}\int_0^{x} b \left(y,x\right)S(x)\varphi(y,t^{n+1}) \dd y \dd x \\
		& = \frac{1}{\Delta x_i} \left[\int_{x_{i-1/2}}^{x_{i+1/2}}\int_y^{x_{i+1/2}} b \left(y,x\right)S(x)\varphi(y,t^{n+1}) \dd x\dd y + \sum_{j=1}^{i-1}\int_{x_{j-1/2}}^{x_{j+1/2}}\int_{x_{i-1/2}}^{x_{i+1/2}}b \left(y,x\right)S(x)\varphi(y,t^{n+1}) \dd x\dd y \right] \\
		& = \frac{1}{\Delta x_i} \sum_{j=1}^{i}  \varphi_j^{n+1} \Delta x_j \int_{q_j^i}^{x_{i+1/2}}S(x) b(x_j,x)\dd x.
	\end{align*}
	Where
	\begin{align*}
		q^i_j = \left\{\begin{array}{ll}
			x_{i-1/2}, &\mbox{for}\quad j\ne i, \vspace{0.2cm}\\
			x_i, &\mbox{otherwise}.
		\end{array}\right.
	\end{align*}
	And the death flux $\hat{\D}_i^n$ is given by
	\begin{align*}
		\hat{\D}_i^{n+1} = S_i \varphi_i^{n+1}.
	\end{align*}
	
	It is worth noting that, in optimal control algorithms, the adjoint system is often constructed using the so-called \emph{discrete adjoint} approach, in which the adjoint equations are derived directly from the discretized forward model. In the present study, however, the forward discretization—particularly the weighted finite volume scheme (WFVS)—relies on weighted reconstructions that render the derivation of an exact discrete adjoint formulation technically involved. By contrast, the discretization of the continuous adjoint equation remains straightforward and computationally efficient. 
	
	\section{Numerical Tests}\label{S_5}
	For the numerical test, we consider a benchmark problem for which the exact solution is already known for a given initial data. We consider the computation domain $\ds \Omega=[0,5]$ and we divide the computational domain into the non uniform subintervals using the geometric recurrence relation $x_{i+1/2} = r x_{i-1/2}$, where $r = 1.4$. For the numerical experiment we take the final time $T=2s$ and divide the time domain $[0,T]$ into $20$ uniform sub intervals. For each of the test problem we consider the guess of initial data $\ds f(x,0) = x \exp[-100x^{100}]$ in the first iteration.  Our goal is to build a numerical technique which is capable to predict the target exact solution $\ds f^\ast$ as well as the corresponding initial data  $\ds f_0$ by means of a numerical version of the optimization algorithm above.
	
	In each of the numerical experiment we consider both the FVS and WFVS scheme for the forward computation. And we will see the to achieve the target function WVS gives better result with less number of iteration in comparison to the FVS. In order to compute the error of tolerance, we calculate the absolute difference between the exact and numerically approximated function for the target $\ds f(x,T) - f^\ast(x)$ and the initial datum $\ds f(x,0) -  f_0(x)$. The explicit formula for the $L^\infty$ error for approximated data with respect to the target as well as the initial data is given  as follows:
	\begin{align*}
		& E(f^\ast) := \max_i \left|f(x,T) - f^\ast(x)\right| \\
		& E(f_0) := \max_i \left|f(x,0) -  f_0(x)\right|.
	\end{align*}   
	
	\subsection{Test Case: I (Linear fragmentation growth)}
	For the first numerical experiment we consider the test problem with linear fragmentation rate. In this regard, we consider the fragmentation rate have the linear growth rate $S(x)=x$ with the the daughter distribution function with `power-law'; rate: $b(x,y)= \frac{2}{y}$. With this above setup the benchmark problem have the exact solution in the following explicit form:
	\begin{align}\label{5_1}
		f(x,t) = \left(1+ t/s\right)^2 e^{-x\left(s+t\right)}
	\end{align}  
	corresponding to the initial data 
	\begin{align}\label{5_2}
		 f_0 (x) = e^{-sx}.
	\end{align}
   The availability of the exact solution at any specific time $t=T$, allow us to formulate the inverse design of the considered problem. We consider the given exact solution at the final time $t =T$ as a target function at time $t=T$ with  $s=1$. Moreover, with respect to the  target \eqref{5_1} we aim to recover the approximate initial datum \eqref{5_2} by the computational version of the gradient-adjoint methodology described above.
   
     For both the FVS and WFVS we divide the computational domain into $35$ non uniform grid points generated by the way mentioned above. Also, the maximum number of iteration for the adjoint based gradient method is set to $\ds 50$ with the learning rate $\varepsilon_0=0.002$. 
     \subsubsection{\textbf{Gradient Verification:}} We implement the Taylor test to validate the correctness of the adjoint-based gradient derived in Section~\ref{S_3}. 
     To this end, we consider the following discrete objective function:
     \[
     J_{\Delta x}\left(f_0\right) := \frac{1}{2} \sum_{i=1}^I 
     \left| f_i^{\mathrm{approx}} - f_i^{\mathrm{Exact}} \right|^2 \Delta x_i.
     \]
     Corresponding to this discrete objective functional, the remainder term is defined by
     \begin{align}
     	R_{\Delta x,d} \left(\eta\right)
     	:= \left| 
     	J_{\Delta x}\left(f_0 + \eta d\right) 
     	- J_{\Delta x}\left(f_0\right) 
     	- \eta \left\langle \nabla J_{\Delta x}\left(f_0\right), d \right\rangle_{\Delta x}
     	\right|.
     \end{align}
     Here, the discrete inner product is defined as
     \[
     \left\langle \theta , \psi \right\rangle_{\Delta x}
     := \sum_{k} \theta_k \psi_k \Delta x_k,
     \]
     where $\eta$ denotes the perturbation size applied to the initial guess $f_0$, 
     and $d$ is a fixed perturbation direction.
     
     \begin{table}[h]
     	\centering
     	\Large \renewcommand{\arraystretch}{1.0}
     	\begin{tabular}{|c|c|c|}
     		\hline
     		$\mathbb{\eta}$ & $R_{\Delta x,d}$ & $R_{\Delta x,d}/\eta^2$ \\ 
     		\hline
     		$1.00E-01$  & $6.565E-01$  & $6.565E+01$  \\ 
     		\hline
     		$1.00E-02$  & $5.157E-03$  & $5.157E+01$  \\ 
     		\hline
     		$1.00E-03$  & $4.922E-05$ & $4.922E+01$  \\ 
     		\hline
     	\end{tabular}
     	\caption{Taylor reminders with respect to different perturbation size.}
     	\label{tab_taylor}
     \end{table} 
     To compute the above remainder term, we consider the same test case and choose a unit direction $d$, normalized with respect to the discrete $L^2$ inner product. Table~\ref{tab_taylor} shows that the Taylor remainder remains consistent as $\eta \downarrow 0$. Moreover, the remainder satisfies
     \( \ds
     R_{\Delta x,d} = \mathcal{O}\left(\eta^2\right),
     \)
     which confirms that the proposed adjoint-based gradient has been implemented correctly.

     In Figures \ref{f1} and \ref{f2}, we plot the numerical results for the target and initial datum with the exact target and initial datum computed by both forward schemes. After $50$ iterations, it can be observed that the approximate and exact target data as well as the initial datum matches perfectly. 
   
	\begin{figure}[htp]
		\begin{subfigure}{.45\textwidth}
			\centering
			\includegraphics[width=1.0\textwidth]{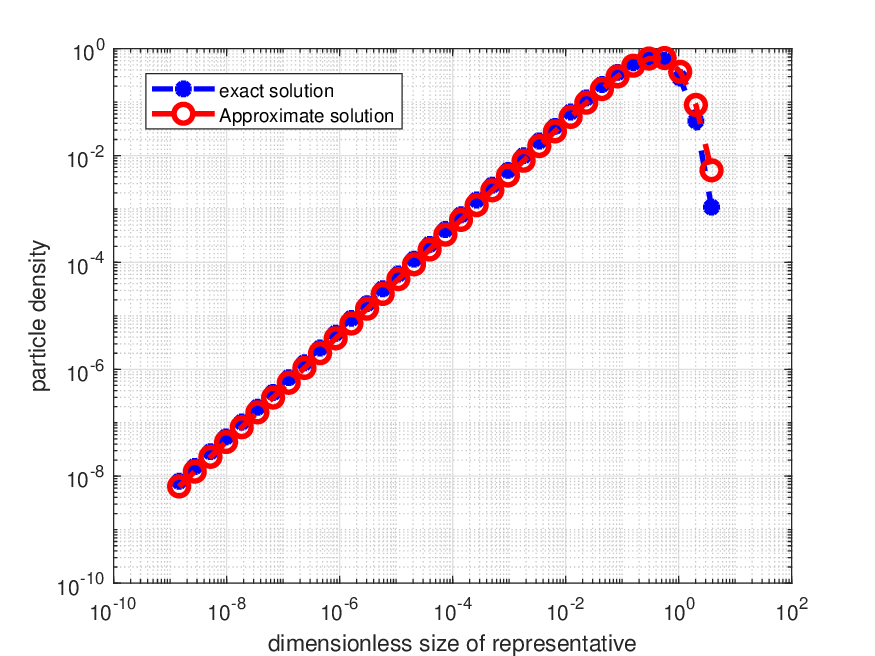}
			\caption{FVS}
			\label{f1_1}
		\end{subfigure}
		\begin{subfigure}{.45\textwidth}
			\centering
			\includegraphics[width=1.0\textwidth]{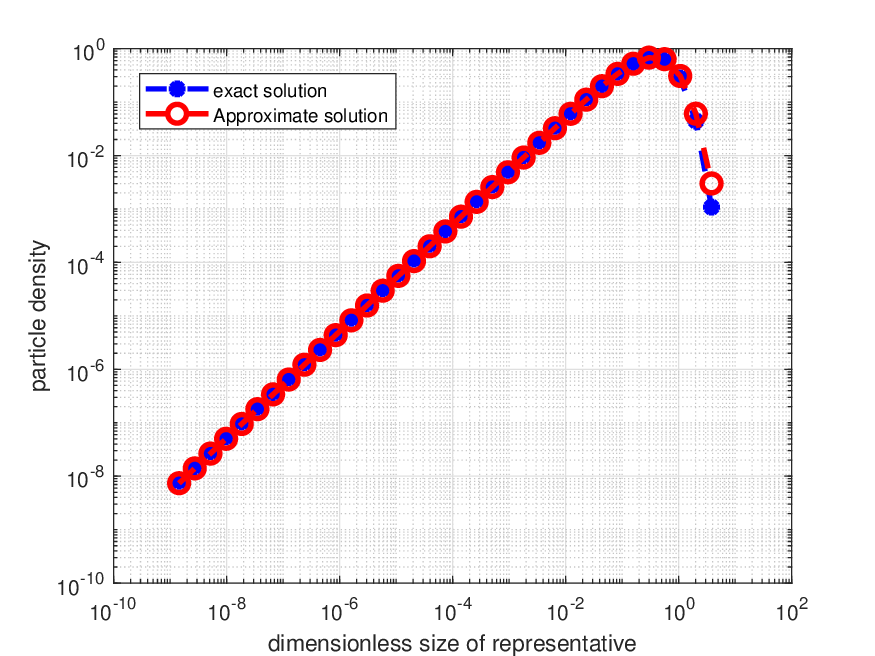}
			\caption{WFVS}
			\label{f1_2}
		\end{subfigure}
		\caption{Comparison between exact and approximated target function by  FVS \& WFVS in linear scale.}
		\label{f1}
	\end{figure}
	 
	\begin{figure}[htp]
		\begin{subfigure}{.45\textwidth}
			\centering
			\includegraphics[width=1.0\textwidth]{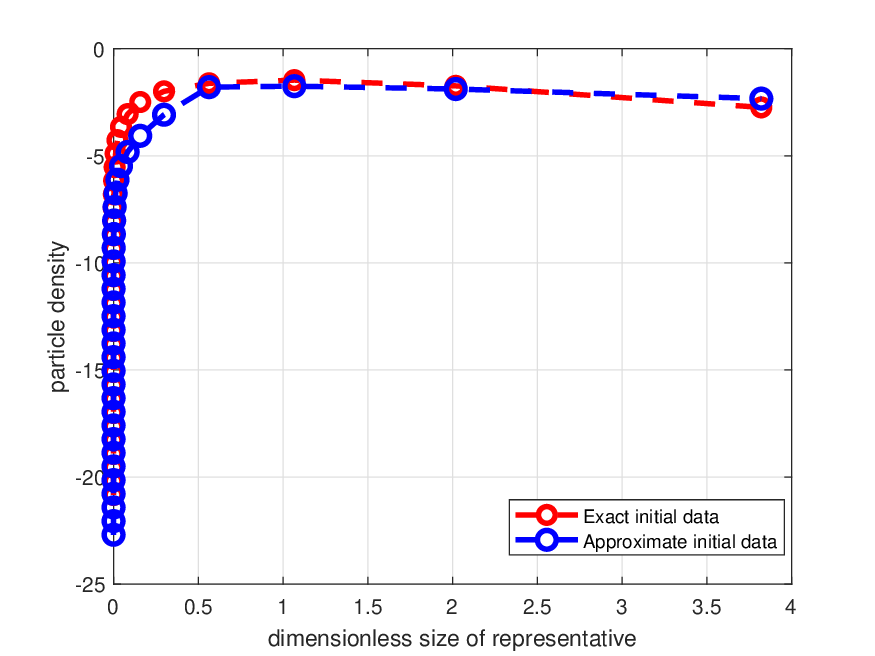}
			\caption{FVS}
			\label{f2_1}
		\end{subfigure}
		\begin{subfigure}{.45\textwidth}
			\centering
			\includegraphics[width=1.0\textwidth]{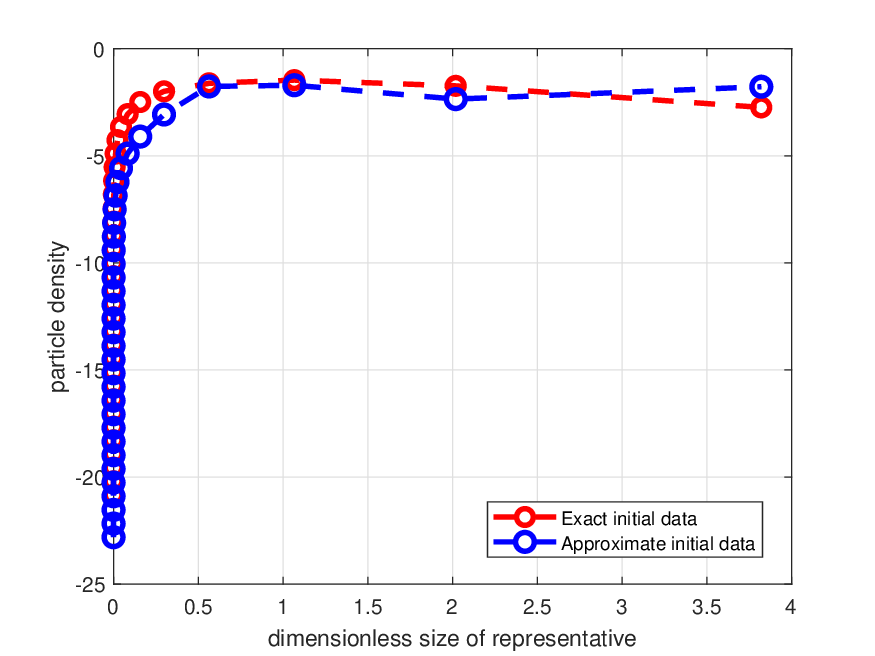}
			\caption{WFVS}
			\label{f2_2}
		\end{subfigure}
		\caption{Comparison between exact and approximated initial datum by  FVS \& WFVS in logarithmic scale.}
		\label{f2}
	\end{figure}
	\begin{table}[h]
		\centering
		\Large \renewcommand{\arraystretch}{1.8} 
		 \begin{tabular}{|c|c|c|c|}
		\hline
			\textbf{Schemes} \qquad& $\mathbf{E(f^\ast)}$ & $\mathbf{E(f_0)}$ & \textbf{Iterations} \\
			\hline 
			FVS & $8.71e-02$ & $9.07e-02$ & $50$ \\
			\hline
			WFVS & $3.32e-02$ & $1.049e-01$ & $50$ \\
			\hline
		\end{tabular}
		\caption{ Error with number of iterations for the Test case I}
		\label{tab:1}
	\end{table}
	Table~\ref{tab:1} presents the qualitative errors from the numerical experiments for both forward schemes. It can be observed that, in approximating the target function, the forward WFVS achieves higher accuracy than the FVS. However, after the same number of iterations, the error \( E(f_0) \) for the forward FVS is smaller than that for the forward WFVS.

	\begin{figure}[htp]
		\begin{subfigure}{.45\textwidth}
			\centering
			\includegraphics[width=1.0\textwidth]{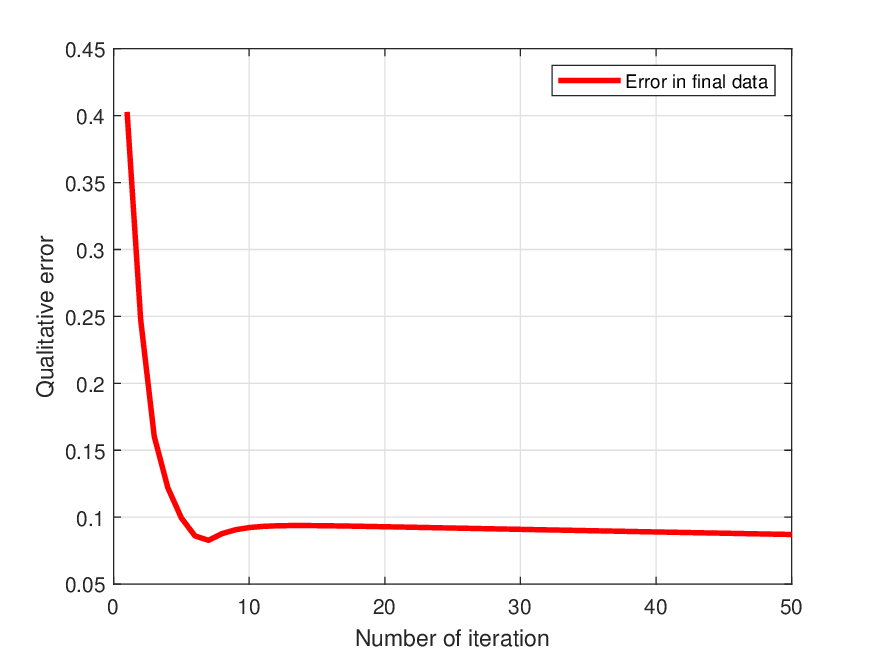}
			\caption{$\ds E\left(f^\ast\right)$}
			\label{f3_1}
		\end{subfigure}
		\begin{subfigure}{.45\textwidth}
			\centering
			\includegraphics[width=1.0\textwidth]{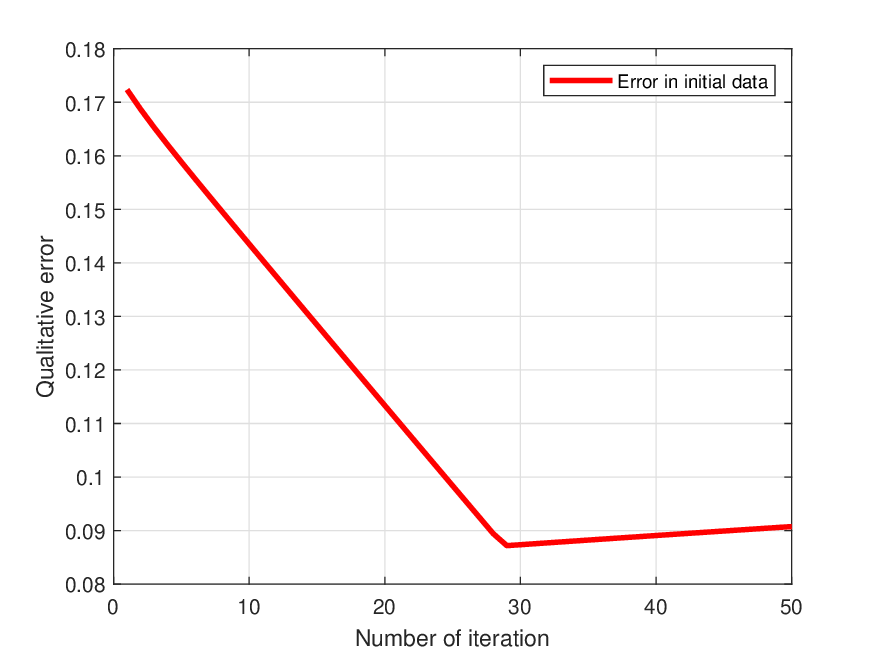}
			\caption{$\ds E\left(f_0\right)$}
			\label{f3_2}
		\end{subfigure}
		\caption{Change of target error and initial datum error by  FVS.}
		\label{f3}
	\end{figure}
	We also plot the change in error with respect to the number of iterations in Figures \ref{f3} and \ref{f4}, corresponding to the target function $\ds f(x,T) - f^\ast(x)$ and the initial datum $\ds f(x,0) - f_0(x)$, respectively. Figures \ref{f3} and \ref{f4} show that the qualitative error for both the target function and the initial datum decreases as the number of iterations increases. 
	\begin{figure}[htp]
		\begin{subfigure}{.45\textwidth}
			\centering
			\includegraphics[width=1.0\textwidth]{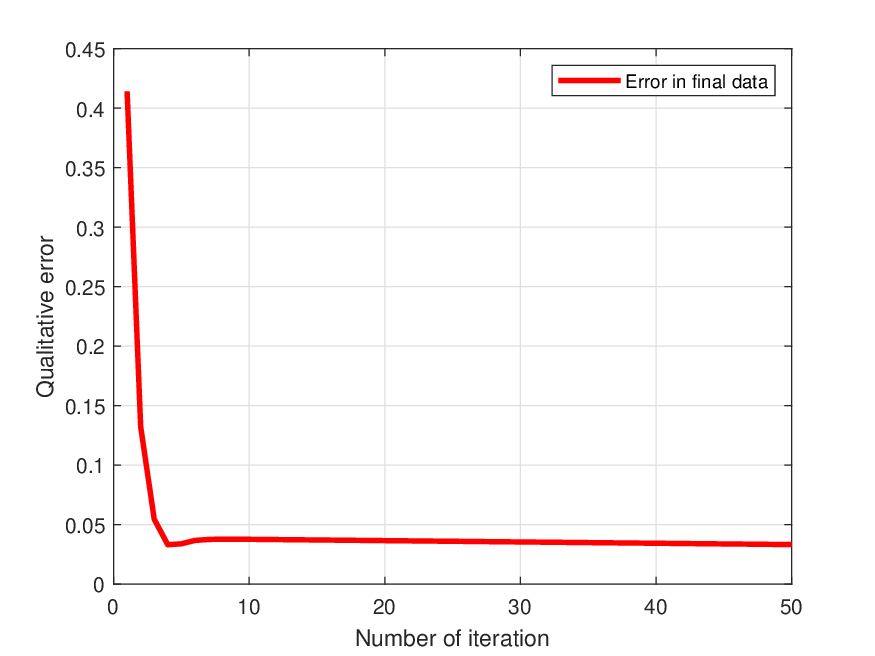}
			\caption{$\ds E\left(f^\ast\right)$}
			\label{f4_1}
		\end{subfigure}
		\begin{subfigure}{.45\textwidth}
			\centering
			\includegraphics[width=1.0\textwidth]{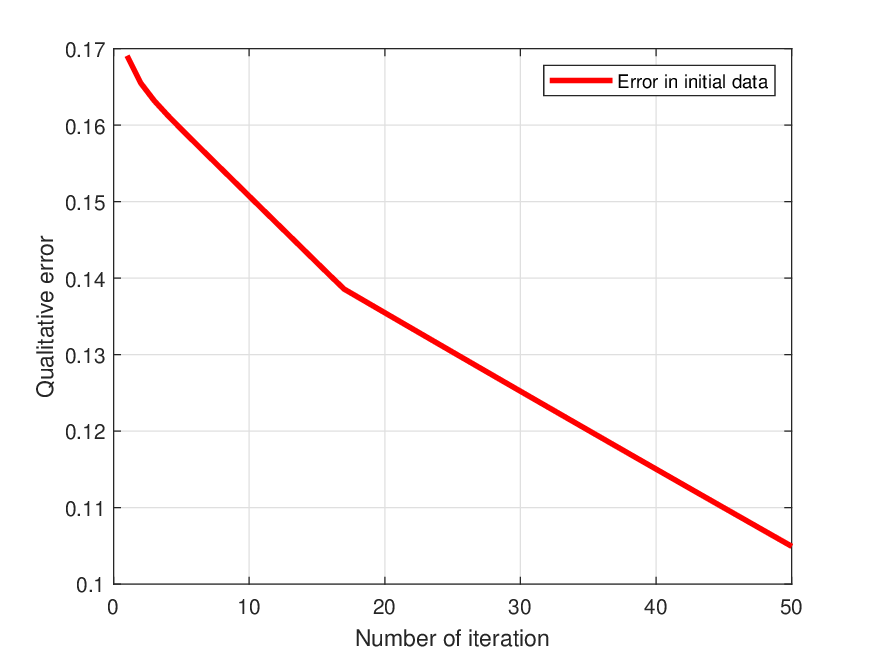}
			\caption{$\ds E\left(f_0\right)$}
			\label{f4_2}
		\end{subfigure}
		\caption{Change of target error and initial datum error by WFVS.}
		\label{f4}
	\end{figure}
	
	\subsection{Test Case: II (Nonlinear fragmentation growth)}
	In the second test problem we consider the fragmentation rate having quadratic growth rate $S(x) = x^2$ with the same daughter distribution function as Test case I. With this setup, the benchmark problem have the exact solution in the following explicit form:
	\begin{align}\label{5_3}
		f(x,t) = \left(1+ 2t\left(1+x\right)\right) e^{-tx^2-x}
	\end{align}  
	Corresponding to the same initial data \eqref{5_2}, and similar to the previous test case, we implement the above gradient-based algorithm by employing both the FVS and WFVS schemes to approximate the target exact solution $\ds f^\ast$ as well as the corresponding initial data $\ds f_0$. In this regard, we divide the computational domain into $25$ non-uniform grid points and choose the learning rate $\ds \varepsilon_0 = 0.0015$.
	
	\begin{figure}[htp]
		\begin{subfigure}{.45\textwidth}
			\centering
			\includegraphics[width=1.0\textwidth]{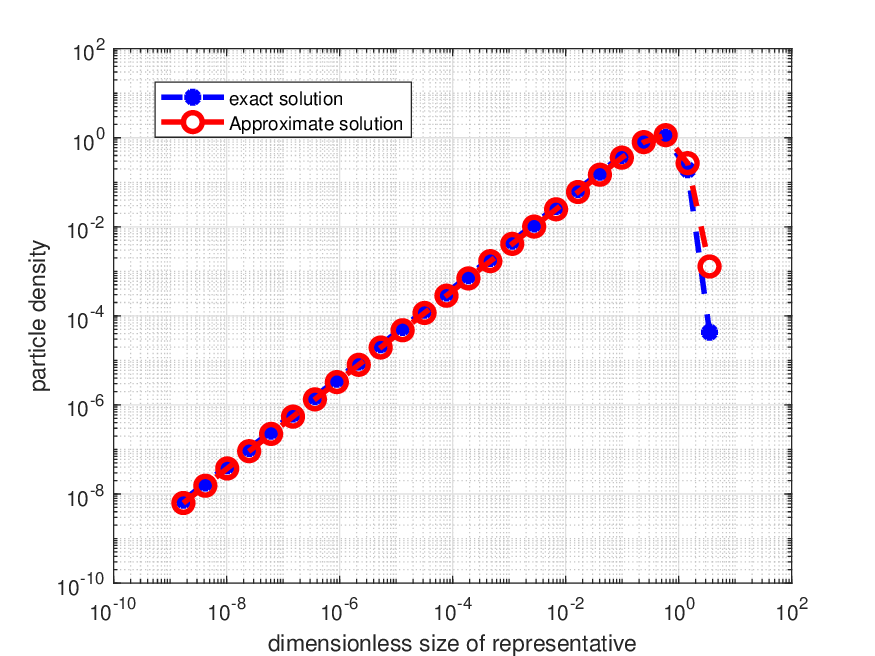}
			\caption{FVS}
			\label{f5_1}
		\end{subfigure}
		\begin{subfigure}{.45\textwidth}
			\centering
			\includegraphics[width=1.0\textwidth]{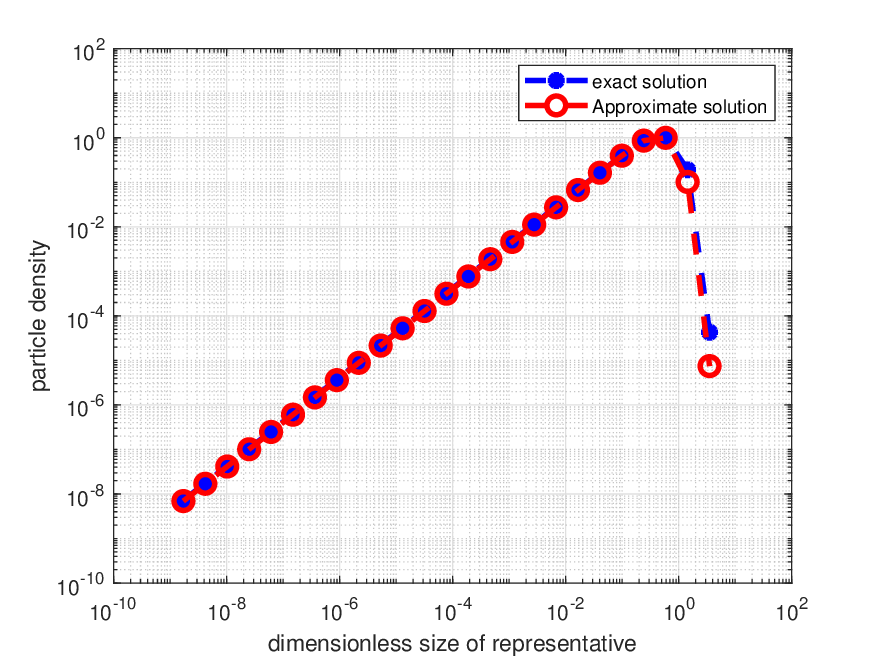}
			\caption{WFVS}
			\label{f5_2}
		\end{subfigure}
		\caption{Comparison between exact and approximated target function by  FVS \& WFVS.}
		\label{f5}
	\end{figure}
	
	\begin{figure}[htp]
		\begin{subfigure}{.45\textwidth}
			\centering
			\includegraphics[width=1.0\textwidth]{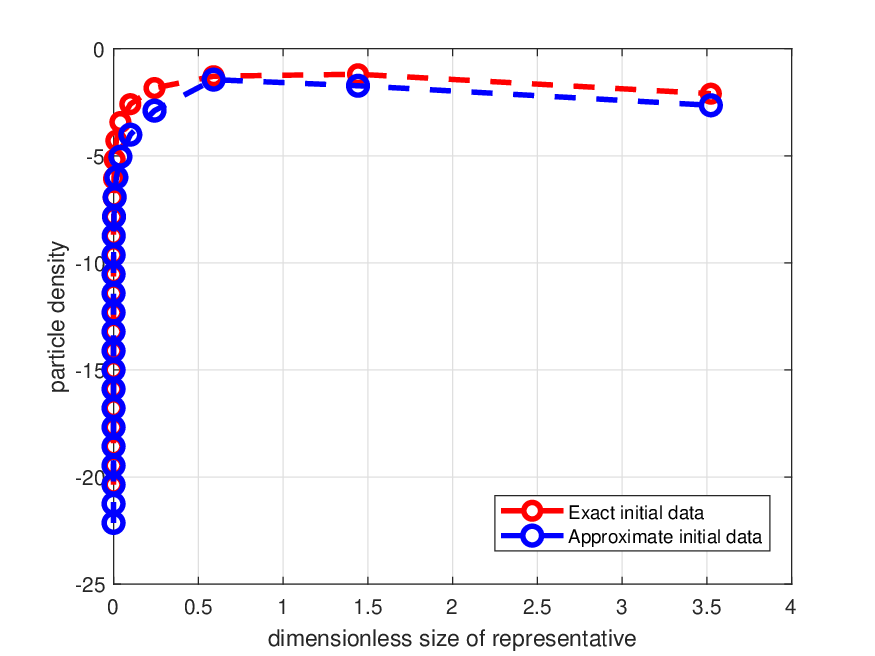}
			\caption{FVS}
			\label{f6_1}
		\end{subfigure}
		\begin{subfigure}{.45\textwidth}
			\centering
			\includegraphics[width=1.0\textwidth]{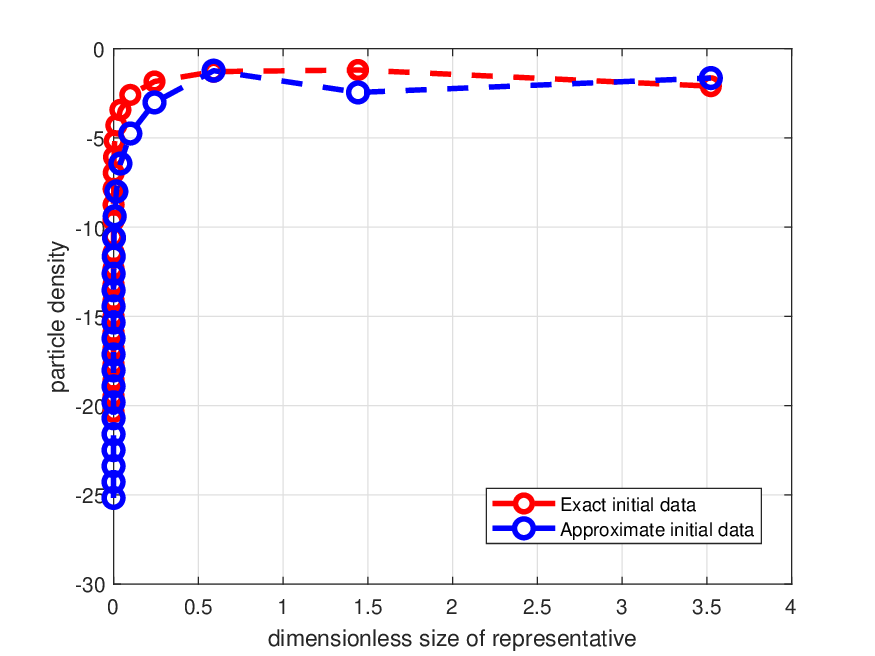}
			\caption{WFVS}
			\label{f6_2}
		\end{subfigure}
		\caption{Comparison between exact and approximated initial datum by  FVS \& WFVS in logarithmic scale.}
		\label{f6}
	\end{figure}
	
		\begin{table}[h]
		\centering
		\Large \renewcommand{\arraystretch}{1.8} 
		\begin{tabular}{|c|c|c|c|}
			\hline
			\textbf{Schemes} \qquad& $\mathbf{E(f^\ast)}$ & $\mathbf{E(f_0)}$ & \textbf{Iterations} \\
			\hline 
			FVS & $1.043e-01$ & $1.262e-01$ & $150$ \\
			\hline
			WFVS & $8.86e-02$ & $2.172e-01$ & $15$ \\
			\hline
		\end{tabular}
		\caption{ Error with number of iterations for the Test case I}
		\label{tab:2}
	\end{table}
	
	Table \ref{tab:2} presents the qualitative error for the numerical experiment using both forward schemes.Since the WFVS performs more effectively than the FVS in capturing the final state in a purely forward test \cite{kumar2015development}, it is observed that the WFVS attains a smaller error in the target function within significantly fewer iterations compared to the FVS. However, when approximating the initial data, the FVS proves to be more effective, as the error $E(f_0)$ decreases within just a few iterations.
	
	\begin{figure}[htp]
		\begin{subfigure}{.45\textwidth}
			\centering
			\includegraphics[width=1.0\textwidth]{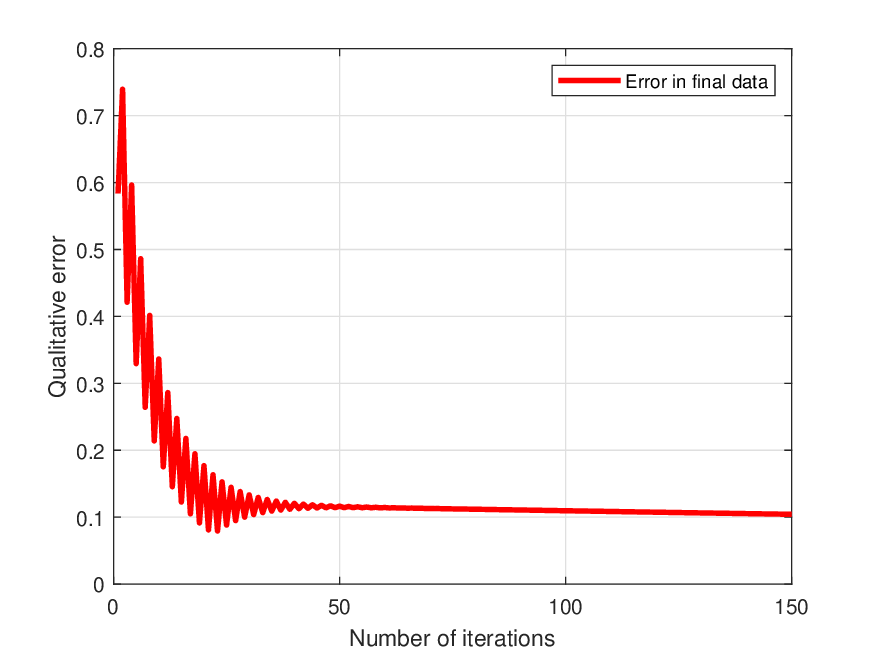}
			\caption{$\ds E\left(f^\ast\right)$}
			\label{f7_1}
		\end{subfigure}
		\begin{subfigure}{.45\textwidth}
			\centering
			\includegraphics[width=1.0\textwidth]{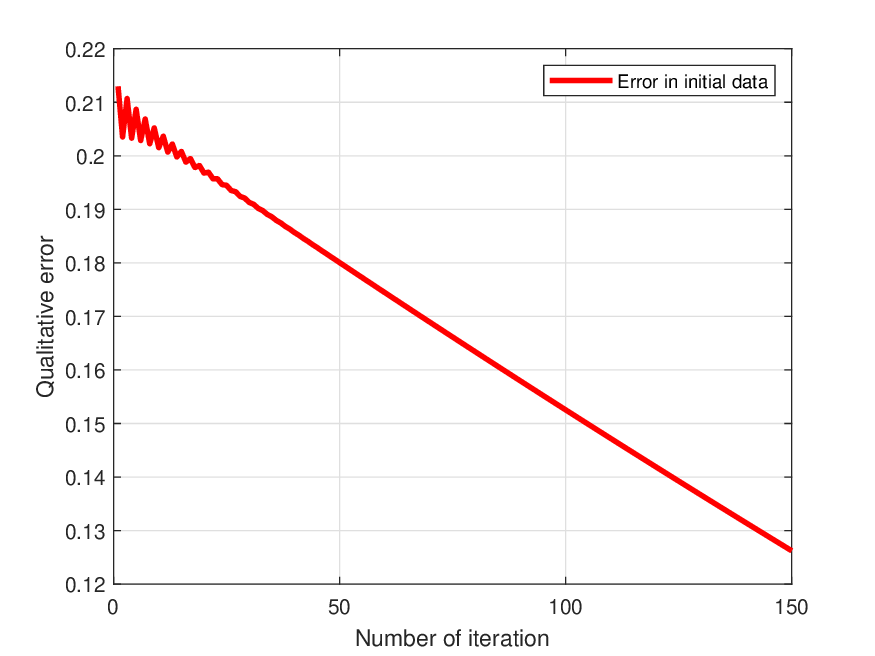}
			\caption{$\ds E\left(f_0\right)$}
			\label{f7_2}
		\end{subfigure}
		\caption{Change of target error and initial datum error by  FVS.}
		\label{f7}
	\end{figure}
	
	We also plot the change in error with respect to the number of iterations in Figures \ref{f7} and \ref{f8}, corresponding to the target function $\ds f(x,T) - f^\ast(x)$ and the initial condition $\ds f(x,0) - f_0(x)$. Figure \ref{f7} shows that the qualitative error obtained using the FVS decreases with the number of iterations for both the target function and the initial data. However, in the case of the forward WFVS, while the error $E(f^\ast)$ decreases over iterations, the error $E(f_0)$ increases. 
	\begin{figure}[htp]
		\begin{subfigure}{.45\textwidth}
			\centering
			\includegraphics[width=1.0\textwidth]{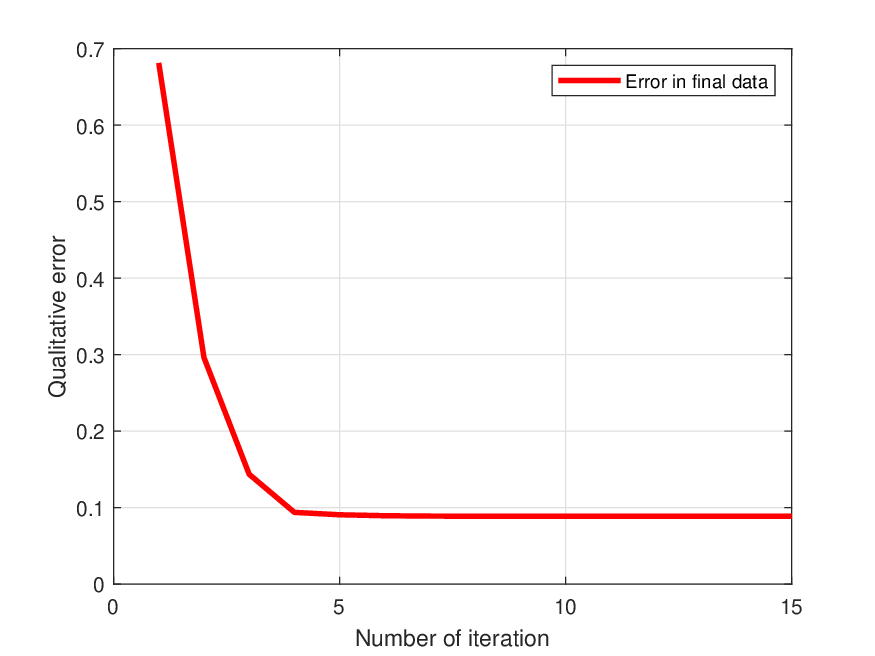}
			\caption{$\ds E\left(f^\ast\right)$}
			\label{f8_1}
		\end{subfigure}
		\begin{subfigure}{.45\textwidth}
			\centering
			\includegraphics[width=1.0\textwidth]{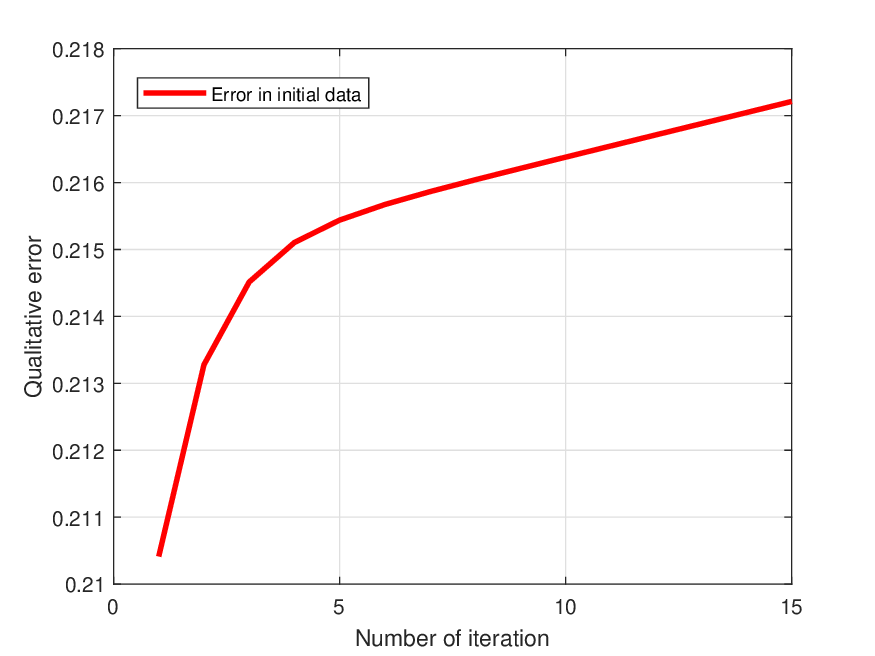}
			\caption{$\ds E\left(f_0\right)$}
			\label{f8_2}
		\end{subfigure}
		\caption{Change of target error and initial datum error by WFVS.}
		\label{f8}
	\end{figure}
	
	Although the weighted finite volume scheme yields a smaller terminal deviation, the reconstructed initial data may exhibit a larger mismatch from the exact profile. This behavior is not contradictory but rather reflects the intrinsic ill-posedness of the backward reconstruction problem. The forward solution operator is smoothing, whereas its inverse is unstable; consequently, small discrepancies at the final time may correspond to significant variations in the initial state. 
	
	Moreover, the weighted discretization introduces additional numerical diffusion, which enhances stability and suppresses oscillations in the forward evolution, but may attenuate high-frequency components that are essential for accurate recovery of the initial condition. Therefore, improved accuracy at the terminal time does not necessarily imply a more accurate reconstruction of the initial data.
	
	 \section*{Acknowledgments}
	This work was initiated during the author’s stay at the Chair for Dynamics, Control, Machine Learning and Numerics at Friedrich–Alexander University Erlangen–Nürnberg, Germany. The author gratefully acknowledges the hospitality of the Chair during this period. 
	
	The author also expresses sincere gratitude to Professor Enrique Zuazua for his invaluable mentorship, insightful discussions, and continuous guidance throughout the development of this work. His expertise and encouragement have greatly contributed to the direction and quality of this research.
	\bibliographystyle{unsrt}
	
	\bibliography{existence}
\end{document}